\documentclass[12pt]{amsart}

\usepackage[T1]{fontenc}
\usepackage{lmodern}
\usepackage{amsmath,amssymb,amsthm,mathtools}
\usepackage{microtype}
\usepackage[hidelinks]{hyperref}

\usepackage{needspace}

\numberwithin{equation}{section}
\newtheorem{theorem}{Theorem}[section]
\newtheorem{lemma}[theorem]{Lemma}
\newtheorem{proposition}[theorem]{Proposition}

\theoremstyle{definition}

\theoremstyle{remark}
\newtheorem{remark}[theorem]{Remark}

\numberwithin{equation}{section}
\newcommand{\NN}{\mathbb N}
\newcommand{\ZZ}{\mathbb Z}
\newcommand{\RR}{\mathbb R}
\newcommand{\Ass}{\operatorname{Ass}}
\newcommand{\astab}{\operatorname{astab}}
\newcommand{\supp}{\operatorname{supp}}
\newcommand{\cost}{\operatorname{cost}}
\newcommand{\depth}{\operatorname{depth}}
\newcommand{\NP}{\operatorname{NP}}

\DeclareMathOperator{\conv}{conv}

\newcommand{\kk}{\Bbbk}
\newcommand{\Rees}{\mathcal R}
\newcommand{\mm}{\mathfrak m}
\newcommand{\yd}{\deg_Y}

\numberwithin{equation}{section}

\begin{document}


\title{Polynomial bounds for the stability of associated primes of monomial ideals}

\author{Tran Nam Trung}
\address{Institute for Artificial Intelligence, Vnu University of Engineering and Technology, Hanoi, Vietnam}
\email{tntrung@vnu.edu.vn}

\dedicatory{Dedicated to Professor Le Tuan Hoa on the occasion of his 70th birthday}

\keywords{Integer matrix, Graver basis, circuit, integer programming, monomial ideal, associated primes}
\subjclass[2000]{13A15,13F20,13A30,52B20,90C10}

\begin{abstract}
Let $I$ be a nonzero proper monomial ideal in
$\kk[x_1,\ldots,x_r]$, and let $d$ be the maximal degree of a
minimal monomial generator. We prove that the index of stability
of the associated primes of its ordinary powers satisfies
\[
 \astab(I)\le\frac{(4e)^{r-1}}4\,d^{r-1}\qquad(r\ge3),
\]
while $\astab(I)=1$ for $r=2$. If $I$ has $s\ge2$ minimal
generators, we also obtain
\[
 \astab(I)\le\min\{r,s\}(s-1)d^{\min\{r-1,s-2\}}.
\]
The first estimate answers Hoa's question on polynomial bounds.
The proofs reduce stabilization to integer optimization for a
matrix whose last row records the power of the ideal. We bound
the degrees of its circuits and Graver basis elements and prove
proximity estimates for optimal solutions in fixed fibers.
A Steinitz rearrangement and a lattice-point count in the
difference body of a simplex give the generator-independent bound.
The matrix estimates hold for arbitrary indexed integral
configurations in a simplex and real linear objective functions.
The stability bounds hold over an arbitrary field, without
normality or equigeneration assumptions.
\end{abstract}

\maketitle


\section{Introduction}\label{sec:introduction}

Integer matrices connect the algebra of monomial ideals with
integer programming and the geometry of lattice points. A product
of monomial generators is encoded by a nonnegative integer vector,
and equality of exponent vectors is encoded by an integer kernel.
Circuits and Graver bases describe the corresponding relations.
In this paper we use these matrix structures to bound the power
from which the associated primes of an arbitrary monomial ideal
remain constant.

Let $R=\kk[x_1,\ldots,x_r]$ be a polynomial ring over a field,
where $r\ge2$, and let $I\subsetneq R$ be a nonzero monomial ideal.
Brodmann~\cite{BrodmannAss} proved that $\Ass_R(R/I^n)$ is constant
for all sufficiently large $n$. The \emph{index of stability} is
\[
 \astab(I)=\min\{N\ge1:\Ass_R(R/I^n)=\Ass_R(R/I^N)
                              \text{ for every }n\ge N\}.
\]
Write $G(I)$ for the minimal monomial generating set, and put
\[
 d=d(I)=\max\{\deg u:u\in G(I)\},\qquad s=\mu(I)=|G(I)|.
\]
Our main aim is a bound polynomial in $d$ for fixed $r$ that is
independent of the number of generators.

Hoa~\cite{Hoa} obtained an explicit general bound using systems
of integer inequalities. Its exponent in $d$ depends on $s$;
substituting a bound for $s$ in terms of $r$ and $d$ does not give
a polynomial bound for fixed $r$. In his survey, Hoa asked whether
$\astab(I)$ admits a bound of order $d(I)^r$;
see~\cite[Question~4(i)]{HoaSurvey}. Our first result answers this
question with exponent $r-1$.

\begin{theorem}\label{thm:main}
Let $I$ be a nonzero proper monomial ideal in
$R=\kk[x_1,\ldots,x_r]$, and let $d=d(I)$. If $r=2$, then
$\astab(I)=1$. If $r\ge3$, then
\begin{equation}\label{eq:main}
 \astab(I)\le \frac{(4e)^{r-1}}4\,d^{r-1}.
\end{equation}
\end{theorem}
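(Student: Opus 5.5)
The plan is to reduce stability to an integer optimization problem for the matrix
\[
 A=\begin{pmatrix} a_1 & a_2 & \cdots & a_s\\ 1 & 1 & \cdots & 1\end{pmatrix}\in\ZZ^{(r+1)\times s},
\]
whose columns are the exponent vectors $a_j$ of $u_j\in G(I)$, with a last row recording the power. A monomial $x^b$ lies in $I^n$ exactly when the fiber $\{\lambda\in\NN^s: A\lambda\le(b,n)\}$, with equality in the last row, is nonempty. A prime $\mathfrak p=(x_i:i\in F)$ lies in $\Ass(R/I^n)$ exactly when there is a witness monomial $x^b\notin I^n$ such that $x_ix^b\in I^n$ for all $i\in F$ (after localizing at $\mathfrak p$, i.e.\ setting the other variables to $1$). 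The argument has two parts: persistence-type inclusions for large $n$, and a bound on the $n$ after which new primes cannot appear or old ones cannot disappear.

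First, the case $r=2$. Every associated prime of $R/I^n$ is either a minimal prime of $I$ (always present) or $\mm=(x_1,x_2)$. In two variables, $\depth R/I^n=0$ for all $n$ exactly when $I$ is not principal up to a monomial factor, i.e.\ when $\mm\in\Ass(R/I)$ after removing the gcd. Concretely, write $I=fJ$ with $J$ being $\mm$-primary or $R$. Then $I^n=f^nJ^n$, and $\Ass(R/I^n)=\Ass(R/(f^n))\cup\Ass(R/J^n)$ up to $\mm$. Since $J^n$ is $\mm$-primary exactly when $J$ is, the set is constant from $n=1$, so $\astab(I)=1$.

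For $r\ge3$, I would argue as follows. Take a witness pair for $\mathfrak p$ at level $n$, namely optimal solutions $\lambda$ of a linear objective (the maximum $\ell$ with $x^b\in I^\ell$, computed via the $e_i$-shifted fibers) in the fiber over $(b,n)$. I would prove a proximity estimate: for $n$ large, an optimal solution in the fiber over $(b,n+1)$ can be chosen within a bounded distance of one in the fiber over $(b',n)$, with $b'$ obtained from $b$ by adding a generator exponent. The distance bound comes from the Graver basis of $A$: the difference of two optimal solutions decomposes conformally into Graver elements, and their degree (the last coordinate, i.e.\ the total power) is what must be bounded.

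Next, I would bound the Graver and circuit degrees of $A$ independently of $s$. Circuits are supported on at most $r+1$ columns, and Cramer's rule bounds their entries by $(r+1)\times(r+1)$ minors. For an $s$-independent bound, I would instead take a Graver element $g=g^+-g^-$, view it as a zero-sum sequence of vectors $a_j-a_k$ in the difference body $\Delta-\Delta$ of the simplex $\Delta=\{x\ge0,\ \sum x_i\le d\}$ projected to the hyperplane where the last row has equal sums, and apply the Steinitz lemma. This rearranges the sequence so that all partial sums lie in a fixed dilate of $\Delta-\Delta$. If two partial sums coincided, $g$ would split into a conformal sum, contradicting primitivity. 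Hence the degree of $g$ is at most the number of lattice points in that dilate of $\Delta-\Delta$ inside the $(r-1)$-dimensional lattice. Counting these via the volume $\binom{2(r-1)}{r-1}/(r-1)!$ times the dilation factor, together with Stirling's estimate, yields a quantity of order $(4e)^{r-1}d^{r-1}/4$, after tuning constants.

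Finally, I would combine these. Once $n$ exceeds the Graver degree bound, any witness at level $n+1$ can be shifted down by a generator to give a witness at level $n$, and conversely, so $\Ass(R/I^n)$ is constant. The main obstacle I expect is the proximity step: showing that the loss of optimality when passing between adjacent fibers is controlled by a single Graver degree, and not by a product with $s$ or with the entries of $b$. I would try the standard Cook–Gerards–Schrijver–Tardos style argument adapted to a fixed last-row fiber, using conformal decomposition and the fact that the objective is linear in real coefficients.
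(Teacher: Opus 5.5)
Your $r=2$ argument is fine; the paper simply cites Trung for it. For $r\ge3$ there are two genuine gaps. The first is dimensional. Your matrix uses the full exponents $a_j\in\NN^r$, so the paired differences $a_j-a_k$ are vectors of $\ZZ^r$ filling out $d(\Delta_r-\Delta_r)$. The homogenizing row only lets you pair positive with negative occurrences; it does not place these vectors in an $(r-1)$-dimensional lattice in general. Steinitz plus the lattice-point count then bounds Graver degrees by $\Phi_r(\lfloor\sigma_r d\rfloor)$, which is of order $d^{r}$. So even if every later step worked, you would not reach $d^{r-1}$. Your optimization is also not a fixed-fiber problem: with equality $A\lambda=(b,n)$ nothing is left to optimize, and ``the largest $\ell$ with $x^b\in I^\ell$'' is not a linear cost on a fiber. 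The paper instead sacrifices one variable. It keeps only $p_j=a'_j\in\NN^{r-1}$ with slack columns, takes the $x_1$-exponent $\sum_j a_{1j}\beta_j$ as a linear cost, and works in $C=\Rees(I)/x_1\Rees(I)$. There a monomial class is nonzero exactly when it is cost-optimal in its fiber (Lemma~\ref{lem:survive}). Only then do Graver elements live in dimension $r-1$ and satisfy $\yd(h^+)\le Q=\Phi_{r-1}(\lfloor\sigma_{r-1}d\rfloor)$.

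The second, more serious gap is your final step, that beyond the Graver bound a witness at level $n+1$ shifts down to level $n$ ``and conversely.'' This is the whole difficulty, and a Graver bound alone does not give it. In the paper, $\mathfrak m\in\Ass(R/I^n)$ iff $M_n\ne0$ for the socle $M=0:_C\mathfrak m$, a graded $\kk[Y_1,\ldots,Y_s]$-module. A Graver bound $b$ makes $L$ generated in $Y$-degree $\le b$. Hence $M$ is generated in degree $\le(r-1)b$, not $b$, since its generators come from least common multiples in $\bigcap_i(L:X_i)$. That is your ``shift down,'' and it settles the case $\dim_{\kk}M=\infty$. Your ``conversely'' says $\mathfrak m$ never disappears past the threshold. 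That is a bound on the end degree of $M$ when $M$ is finite-dimensional, and generation degree does not control it (compare $\kk[Y]/(Y^N)$).

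The paper bounds the end degree in several steps. A top-degree class is killed by every $Y_j$, so each $\beta_j\le Q-1$. Alone this gives only $n\le s(Q-1)$, with $s$ as large as $\binom{d+r-1}{r-1}$. To remove $s$, the paper chooses a real optimal vertex supported on at most $\ell(I)$ generators, using compact faces of $\NP(I)$ (Lemma~\ref{lem:support}). It gets $\ell(I)\le r-1$ from the Burch--Brodmann inequality, since depth is eventually positive in this case. It then applies an integer-versus-LP proximity estimate within a single fiber (Proposition~\ref{prop:matrix-proximity}, which must account for fractional slack coordinates) to bound occurrences off that support. The result is $\astab(I)\le(2r-3)Q-r+2$. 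None of these ingredients is in your plan, and proximity between adjacent fibers is not what is needed.

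Finally, ``tuning constants'' is not routine. The factor $2r-3$ and the shift in $\Phi_m(L)\le\binom{2m}{m}(L+m/2)^m/m!$ force a case split. The paper uses exact Ehrhart values for $r-1\le7$, and Theorem~\ref{thm:generators} with $s\le\binom{d+m}{m}$ when $d<2m$.
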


The coefficient in~\eqref{eq:main} depends only on the number of
variables. In particular, the bound remains polynomial in $d$ when
the number of minimal generators varies. For ideals with few
generators, a different estimate can be substantially smaller.

\begin{theorem}\label{thm:generators}
Let $I\subsetneq R=\kk[x_1,\ldots,x_r]$ be a nonzero monomial ideal
with $d=d(I)$ and $s=\mu(I)$. If $s=1$, then $\astab(I)=1$.
If $s\ge2$, then
\begin{equation}\label{eq:generator-main}
 \astab(I)\le
 \min\{r,s\}(s-1)d^{\min\{r-1,s-2\}}.
\end{equation}
Moreover, $\astab(I)=1$ when $s=2$.
\end{theorem}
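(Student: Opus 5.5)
The plan is to follow the strategy the abstract announces: translate membership of a prime in $\Ass_R(R/I^n)$ into a statement about lattice points and an integer program for the matrix
\[
 A=\begin{pmatrix} a_1 & a_2 & \cdots & a_s\\ 1 & 1&\cdots&1\end{pmatrix}\in\ZZ^{(r+1)\times s},
\]
where $a_1,\dots,a_s\in\NN^r$ are the exponent vectors of $G(I)$. A vector $\lambda\in\NN^s$ with $A\lambda=(\beta,n)$ records a product of $n$ generators with exponent $\beta$. First localize: for a monomial prime $P=(x_i:i\in F)$, $P\in\Ass(R/I^n)$ if and only if, after setting $x_j=1$ for $j\notin F$, the maximal ideal lies in $\Ass$ of the $n$th power of the localized ideal. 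That ideal has at most $s$ generators of degree at most $d$ in $|F|\le r$ variables. It therefore suffices to show that the set of $n$ with $\mm\in\Ass(R/I^n)$ is upward closed beyond $N=\min\{r,s\}(s-1)d^{\min\{r-1,s-2\}}$. Using the Ratliff-type inclusion $\Ass(R/I^n)\subseteq\Ass(R/I^{n+1})$ for $\overline{I^n}$ is not available without normality, so I will instead prove directly that a witness at level $n\ge N$ can be pushed to level $n+1$.

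A witness is a monomial $x^\gamma\notin I^n$ with $x^\gamma x_i\in I^n$ for all $i$. In the matrix language, $\gamma$ lies outside $\bigcup_{\lambda}(A\lambda|_{\text{top}}+\NN^r)$ over fibers of last coordinate $n$, while each $\gamma+e_i$ lies inside. This can be phrased with the integer program $\max\{c\cdot\lambda\}$ or by a "distance-to-$\gamma$'' objective in fixed fibers. The key step is a proximity lemma. For $n$ large, an optimal solution $\lambda^{(n)}$ in fiber $n$ and one in fiber $n+1$ differ by a vector whose $\ell_1$-norm is bounded in terms of circuits of $A$. Every Graver/circuit element $g$ of $\ker A$ has $\|g\|_1\le \min\{r,s\}\cdot(\text{max subdeterminant})$, and the subdeterminants of $A$ are at most about $d^{\min\{r-1,s-2\}}$. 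The last row lets us subtract columns to reduce to an $r\times(s-1)$ matrix of differences $a_j-a_1$; a Hadamard/simplex-volume estimate with entries summing to at most $d$ in each column gives the determinant bound. Once $n\ge (s-1)\cdot\max\|g\|_1$, some coordinate $\lambda^{(n)}_j$ is large enough that we can take $\gamma'=\gamma+a_j$ and verify that $\gamma'$ is a witness for $n+1$. Membership $\gamma'+e_i\in I^{n+1}$ is immediate. Nonmembership $\gamma'\notin I^{n+1}$ follows because any representation $\gamma'\ge A\mu$ with $|\mu|=n+1$ can, by conformal Graver decomposition of $\mu-\lambda'$, be modified to one using $a_j$, yielding $\gamma\in I^n$, a contradiction. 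The same reasoning gives $\astab=1$ for $s\le2$, since then $\ker A$ is trivial or the fibers are chains, and for $s=1$ trivially.

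For Theorem~\ref{thm:main}, I would replace the dependence on $s$ by a Steinitz rearrangement. Differences $\lambda-\lambda'$ between optimal points in consecutive fibers decompose into a sum of columns $\pm(a_j,1)$ that can be ordered so that all partial sums stay in a bounded region. If a sequence is longer than the number of lattice points in that region, two partial sums coincide, giving a shorter exchange. The region is contained in the difference body $\Delta-\Delta$ of the simplex $\Delta=\{x\ge0,\sum x_i\le d\}$, dilated suitably in dimension $r-1$ after projecting along the degree direction. Counting its lattice points gives roughly $\binom{2r-2}{r-1}d^{r-1}\le (4e)^{r-1}d^{r-1}/4$, using Stirling-type bounds. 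For $r=2$, the staircase structure shows witnesses at level $1$ persist, giving $\astab=1$.

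The main obstacle is the proximity step: showing that a witness at level $n$ truly lifts to level $n+1$, and conversely that no new primes appear after $N$. The difficulty is that the associated-prime condition is not a single linear objective but a family of covering conditions. I would first try to encode it as optimality of $\gamma$ for the objective "minimize the slack of $\gamma-A\lambda$ in a chosen coordinate'' over each fiber, and then apply proximity uniformly over the finitely many objectives.
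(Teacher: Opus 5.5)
\paragraph{The missing half of stability.} Showing that $\{n:\mm\in\Ass_R(R/I^n)\}$ is upward closed beyond $N$ is not enough. It only gives persistence, and it does not bound $\astab(I)$, because it does not prevent $\mm$ from appearing for the first time at some $n>N$. You call this converse ``the main obstacle'' but give no mechanism for it, and it is the heart of the proof.

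In the paper it comes from finite generation in bounded degree:
\begin{itemize}
\item The map $x^\gamma\mapsto[x_1x^\gamma\tau^n]$ identifies $(I^n:\mm)/I^n$ with the degree-$n$ part of the socle $M=0:_C\mm$ of $C=\Rees(I)/x_1\Rees(I)$. This $M$ is a graded module over $A=\kk[Y_1,\ldots,Y_s]$.
\item The Graver bound $\yd(h^+)\le\Gamma$ shows that the monomial ideal $L$ of vanishing monomials is generated in $Y$-degree at most $\Gamma$.
\item An analysis of $\bigcap_{i\ge2}(L:X_i)$ then shows that $M$ is generated in degrees at most $g=\min\{(r-1)\Gamma,s\Gamma-1\}$.
\end{itemize}
Hence $M_{n+1}=A_1M_n$ for $n\ge g$, so once $M_n=0$ for some $n\ge g$, every later component vanishes. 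This is what rules out late appearances.

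Your lifting step is sound in spirit: a representation with some $\beta_j\ge\Gamma$ lets you pass from $x^\gamma$ to $x^{\gamma+a_j}$, which is the argument behind Proposition~\ref{prop:common}(2). But it only controls where $\mm$ can disappear. Also, pigeonholing over all $s$ coordinates needs $n\ge s(\Gamma-1)+1$, and this exceeds $r\Gamma$ whenever $s>r$ and $d\ge2$.

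The factor $\min\{r,s\}$ needs a representation with few large coordinates. The paper takes an optimal vertex of the linear relaxation with at most $\ell(I)$ positive $Y$-coordinates, using compact faces of $\NP(I)$. It then uses proximity \emph{within the same fiber} to put at most $\Gamma-1$ occurrences outside that support. Finally, the Burch--Brodmann inequality gives $\ell(I)\le r-1$ when $M$ is finite-dimensional. Your proposed proximity between optima in fibers $n$ and $n+1$ does not make sense, because such points do not differ by kernel vectors.

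\paragraph{The matrix and the degree bounds.} The setup also needs repair.
\begin{itemize}
\item Your $A$ has only generator columns. Two representations of $x^{\gamma'}$ with different slack therefore do not differ by an element of $\ker A$, so the conformal Graver decomposition of $\mu-\lambda'$ is unavailable. Slack columns must be added.
\item The claimed subdeterminant bound is false. For $x_1^d,x_2^d,x_3^d,x_1x_2x_3$ (so $r=3$, $s=4$), your difference matrix has determinant $\pm d^2(d-3)$, while $\min\{r-1,s-2\}=2$. In general such minors are normalized volumes of simplices in $d\Delta_r$.
\item The paper reaches the exponent $\min\{r-1,s-2\}$ by two devices. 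First, the $x_1$-exponent becomes the linear cost, so the slack columns live in $\RR^{r-1}$. Second, it bounds circuits rather than minors: the $k$ generator columns of a circuit form an affine circuit of dimension $k-2$, so the circuit has $Y$-degree at most $d^{k-2}$ (Lemma~\ref{lem:rs-circuit}).
\item $\|g\|_1\le\min\{r,s\}\cdot\Delta$ is at best a circuit bound, not a Graver bound. A Graver element that is not a circuit is a sign-compatible combination of up to $\dim\ker_{\RR}D=s-1$ circuits with coefficients in $(0,1)$. That is exactly where the factor $s-1$ in $\Gamma$ comes from (Lemma~\ref{lem:rs-graver}).
\end{itemize}
So your accounting assigns $s-1$ and $\min\{r,s\}$ to the wrong steps, and its agreement with the target bound is coincidental. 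The case $s=2$ also does not follow from $\ker A=0$, since slack relations remain. In the paper it is the case $\Gamma=1$, where the threshold is one.
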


The associated primes of ordinary powers can both appear and
disappear before stabilization. Heuberger, Rath, and Rissner
\cite{HRR} developed a general method for bounding the
\emph{copersistence index}, the least power from which
\[
 \Ass_R(R/I^{n+1})\subseteq\Ass_R(R/I^n)
\]
holds for every subsequent $n$. Their construction separates the
choice of an integer inequality system from the estimate for its
solutions, and improves Hoa's copersistence bound by an exponential
factor. Theorems~\ref{thm:main} and~\ref{thm:generators} give bounds
for full stabilization, including the powers at which an associated
prime can disappear. Recent examples of fluctuations in the
occurrence of the maximal ideal~\cite{CNQ} further illustrate the
need for this additional control.

The matrix used here has the form
\[
 D=\begin{pmatrix}I_m&p_1&\cdots&p_s\\0&1&\cdots&1\end{pmatrix},
 \qquad p_j\in d\Delta_m\cap\ZZ^m,\qquad m=r-1,
\]
where $\Delta_m$ is the standard simplex. For an arbitrary real
linear objective, we compare its optimal solutions in a nonempty
integer fiber with those of the linear relaxation. For $s\ge2$,
put $\Gamma=(s-1)d^{\min\{m,s-2\}}$. Our circuit argument bounds
the $Y$-degree of every Graver basis element by $\Gamma$ and
gives an integral optimum $u$ near any real optimum $z$ with
\[
 \frac12\|u_Y-z_Y\|_1<\Gamma.
\]
These estimates are established for arbitrary indexed configurations
$p_1,\ldots,p_s$, including repeated points. The proofs use
determinants, the normalized volumes of simplices, and
sign-compatible circuit decompositions.

An alternative Graver degree bound is independent of $s$.
The last row of $D$ permits positive and negative generator
occurrences to be paired before the Steinitz lemma is applied.
The resulting zero-sum vectors lie in
$d(\Delta_m-\Delta_m)$, so the count takes place in dimension
$m=r-1$. The Ehrhart polynomial of the type-$A_m$ root polytope
then bounds the number of partial sums. Proximity estimates are
obtained by applying the same rearrangement to a sequence with
auxiliary vectors; the contribution of fractional slack
coordinates is included explicitly. For an optimal vertex this
also gives an integral optimum with $\|u_Y-z_Y\|_1\le2mQ$,
where $Q$ is the lattice-point count and is independent of $s$.

The algebraic application uses the graded $\mathfrak m$-socle
of $\mathcal R(I)/x_1\mathcal R(I)$, where
$\mathfrak m=(x_1,\ldots,x_r)$. Its degree-$n$ component detects
the occurrence of $\mathfrak m$ in $\Ass_R(R/I^n)$.
The matrix bounds control its generating degrees and its last
nonzero degree when it is finite-dimensional. Compact faces of
the Newton polyhedron supply optimal solutions with support
bounded by the analytic spread. Monomial localization then gives
one stabilization threshold for all associated primes.

For integral closures of monomial powers, bounds of order
$d^{r-2}$ are known from Trung~\cite{Trung} and
Hoa--Trung~\cite{HoaTrung}. They apply to ordinary powers of
normal monomial ideals. The normal ideals
$J=\overline{K^r}$ constructed in~\cite{Hoa} have stability
indices bounded below by a positive multiple of $d(J)^{r-2}$
for fixed $r\ge4$. In relating the successive quotients used
there to ordinary powers, one uses
$\Ass_R(R/J^n)=\Ass_R(J^{n-1}/J^n)$; see~\cite{Trung}.
Whether the exponent $r-1$ in Theorem~\ref{thm:main} can be
reduced to $r-2$ for arbitrary monomial ideals is not resolved
by the estimates established here.

Section~\ref{sec:matrix} proves the circuit, Graver degree, and
proximity estimates for integer matrices. Section~\ref{sec:algebra}
establishes their algebraic application and proves
Theorem~\ref{thm:generators}. Section~\ref{sec:lattice} obtains
the generator-independent stabilization bound. Section
\ref{sec:uniform} combines the estimates to prove
Theorem~\ref{thm:main}.

\section{Integer matrices, circuits, and proximity}\label{sec:matrix}

Let $m,s,d$ be positive integers with $s\ge2$. Choose an indexed
configuration $p_1,\ldots,p_s\in\NN^m$ with $|p_j|\le d$.
The points are allowed to repeat. Write $[q]=\{1,\ldots,q\}$
for a positive integer $q$. For each positive integer $q$,
write $\Delta_q=\{v\in\RR_{\ge0}^q:\sum_i v_i\le1\}$.
Set $\Delta_0=\{0\}$. Throughout, $\NN$ includes zero,
and $|p|$ denotes the sum of the coordinates of $p\in\NN^m$.
Consider
\begin{equation}\label{eq:D}
 D=\begin{pmatrix}
 e_1&\cdots&e_m&p_1&\cdots&p_s\\
 0&\cdots&0&1&\cdots&1
 \end{pmatrix}.
\end{equation}
Its first $m$ columns are slack columns, labelled $X$, and the
remaining columns are labelled $Y_1,\ldots,Y_s$. For a vector
$u=(\kappa,\beta)$, write $u_Y=\beta$ and
$\yd u=\sum_j\beta_j$. The homogenizing row gives
\begin{equation}\label{eq:rs-kernel-dimension}
 \operatorname{rank}D=m+1,\qquad
 \dim_{\RR}\ker_{\RR}D=s-1.
\end{equation}

For $q=(b,n)\in\NN^m\times\NN$, define
\[
 F_q=\{u\in\NN^{m+s}:Du=q\},\qquad
 P_q=\{u\in\RR_{\ge0}^{m+s}:Du=q\}.
\]
Every nonempty $P_q$ is bounded: the $Y$-coordinates sum to $n$,
and the slack coordinates are bounded by $b$. An arbitrary real
linear cost $\cost(u)=c\cdot u$ therefore has an optimal vertex
on $P_q$. Every vertex has at most $m+1$ positive coordinates,
because the columns indexed by its positive coordinates are
linearly independent.

For real vectors $h,w$, write $h\sqsubseteq w$ when they lie
in the same closed orthant and $|h_i|\le|w_i|$ for every $i$.
Thus $h^+\le w^+$ and $h^-\le w^-$. The \emph{Graver basis}
$\mathcal G(D)$ consists of the nonzero integer kernel vectors
minimal under $\sqsubseteq$. Every integer kernel vector is a
conformal sum of Graver basis elements: subtracting a proper
conformal subrelation decreases its integer $\ell_1$-norm.

Set
\begin{equation}\label{eq:rs-parameters}
 \gamma=\min\{m,s-2\},\qquad \Gamma=(s-1)d^\gamma.
\end{equation}
A \emph{circuit} of an integer matrix is a nonzero integer kernel vector
with inclusion-minimal support, normalized so that the greatest common
divisor of its coordinates is one.  An \emph{affine circuit} is a minimally
affinely dependent indexed configuration of points.  Two real vectors are
\emph{sign-compatible} if they lie in one closed coordinate orthant.
When decomposing a vector, we require in addition that every summand
vanish outside its support.  Sign compatibility alone does not impose
the coordinatewise bounds in the definition of conformality used above.
For background on triangulations and Graver bases, see
\cite{DRS,Onn}.  The estimates needed here are proved below.

\subsection{Circuit degrees}

\begin{lemma}[Affine circuit degree]\label{lem:rs-affine}
Let $t\ge1$, and let $p_1,\ldots,p_{t+2}\in\NN^t$ form an affine
circuit whose affine hull is $\RR^t$.  Assume $|p_i|\le d$ for every $i$.
If $c\in\ZZ^{t+2}$ is its primitive affine relation, then
\[
 \sum_i c_i^+=\sum_i c_i^-\le d^t.
\]
For $t=0$, the same assertion holds for the indexed configuration of two
equal points: its primitive relation has degree one.
\end{lemma}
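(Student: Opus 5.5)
Since the $t+2$ points form an affine circuit spanning $\RR^t$, the affine relation is unique up to scaling and has full support. I would write it with Cramer's rule via homogenization. Lift each point to $\hat p_i=(p_i,1)\in\ZZ^{t+1}$. A real vector $c$ is an affine relation exactly when $\sum_i c_i\hat p_i=0$. The $(t+1)\times(t+2)$ matrix $\hat P$ with columns $\hat p_i$ has rank $t+1$, because the affine hull is $\RR^t$. Its kernel is therefore one-dimensional and spanned by the vector of signed maximal minors
\[
 c'_i=(-1)^i\det(\hat p_j:j\ne i).
\]
Each $c'_i$ is nonzero, since by minimality every $t+1$ of the points are affinely independent. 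The primitive relation is $c=c'/g$ with $g=\gcd_i c'_i\ge1$, so it suffices to prove $\sum_i (c'_i)^+\le d^t$.

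Next I would interpret the minors as volumes. The quantity $|\det(\hat p_j:j\ne i)|$ is the normalized volume $t!\,\mathrm{vol}(S_i)$ of the simplex $S_i=\conv(p_j:j\ne i)$. The sign pattern of $c'$ is the Radon partition $[t+2]=A^+\sqcup A^-$ of the circuit. The two triangulations of the polytope $C=\conv(p_1,\ldots,p_{t+2})$ are $\{S_i:i\in A^+\}$ and $\{S_i:i\in A^-\}$; this is the standard fact about the two triangulations of a circuit. Hence
\[
 \sum_i (c'_i)^+=\sum_{i\in A^+}t!\,\mathrm{vol}(S_i)=t!\,\mathrm{vol}(C),
\]
and the same holds for the negative part, which gives the equality $\sum c_i^+=\sum c_i^-$. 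Alternatively, this equality follows directly by reading off the last coordinate of $\sum_i c_i\hat p_i=0$.

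For the bound, all points lie in $d\Delta_t$ because $p_i\in\NN^t$ and $|p_i|\le d$. So $C\subseteq d\Delta_t$ and
\[
 t!\,\mathrm{vol}(C)\le t!\,\mathrm{vol}(d\Delta_t)=d^t.
\]
Dividing by $g\ge1$ gives $\sum_i c_i^+\le d^t$.

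The main obstacle is justifying that the simplices $S_i$ with $i\in A^+$ triangulate $C$ with disjoint interiors. I would argue directly. For $x\in C$, consider the one-parameter family of representations $x=\sum\lambda_i p_i$ with $\sum\lambda_i=1$, namely $\lambda+\tau c'$. Moving $\tau$ until the first $\lambda_i$ with $i\in A^+$ hits zero lands $x$ in some $S_i$ with $i\in A^+$. Uniqueness of this stopping point for generic $x$ gives disjoint interiors. The same argument with $A^-$ gives the second triangulation.

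As a fallback that avoids the geometry, one can use the sign-alternation of Cramer's coefficients: $c'_i$ has the sign of $(-1)^i\det$, and the positive part sums to $|\det|$ of the augmented $(t+2)\times(t+2)$ matrix manipulations. However, the volume argument is cleaner.

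The case $t=0$ is immediate. Two equal points have relation $(1,-1)$, whose degree is $1=d^0$.
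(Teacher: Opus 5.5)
Your proposal is correct and follows the paper's proof: the signed maximal minors of the homogenized matrix span the relation, and the simplices $\conv\{p_j:j\ne i\}$ with $c'_i>0$ tile $\conv\{p_1,\ldots,p_{t+2}\}$, by the same argument of sliding a convex representation along the kernel direction until the first coefficient with positive index vanishes. The disjoint-interiors step is made precise by noting that this stopping representation does not depend on the starting one. The bound then comes from $t!\operatorname{vol}_t(C)\le t!\operatorname{vol}_t(d\Delta_t)=d^t$ after dividing by $g\ge1$; the determinant ``fallback'' you mention is not an argument as stated, but it is not needed.
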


\begin{proof}
For $t\ge1$, let $B$ be the matrix with columns $(p_i,1)$.
It has rank $t+1$, and its signed maximal minors form a nonzero
integer kernel vector $b$.  All these minors are nonzero, since
every proper subconfiguration is affinely independent.  After changing
signs if necessary, write $c=b/g$, where $g\ge1$ is the greatest
common divisor of the coordinates of $b$.

Put $P=\conv\{p_1,\ldots,p_{t+2}\}$.  The simplices
\[
 P_i=\conv\{p_j:j\ne i\},\qquad b_i>0,
\]
cover $P$ and have pairwise disjoint interiors.  Indeed, given a convex
representation $x=\sum_i\lambda_i p_i$, move the coefficient vector
in the direction $-b$ until the first coefficient with $b_i>0$ becomes
zero.  During this motion all coefficients remain nonnegative and their
sum remains one.  The resulting representation lies in one of the $P_i$.
Moreover, all affine representations of $x$ differ by a multiple of $b$.
The endpoint just obtained is consequently independent of the initial
representation.  If two indices vanish there, the point belongs to the
common face obtained by omitting both indices.  This proves the assertion
about interiors.

The determinant $|b_i|$ is $t!$ times the Euclidean volume of $P_i$.
Thus
\[
 \sum_{b_i>0}|b_i|=t!\operatorname{vol}_t(P).
\]
Since $P\subseteq d\Delta_t$ and
$t!\operatorname{vol}_t(d\Delta_t)=d^t$, we get
\[
 \sum_i c_i^+=\frac{t!\operatorname{vol}_t(P)}{g}\le d^t.
\]
The equality of the positive and negative sums follows from the affine
relation.  For $t=0$, the primitive relation is $(1,-1)$, up to sign.
\end{proof}

\begin{lemma}[Circuit degrees]\label{lem:rs-circuit}
Let $c$ be a circuit of $D$, and let $k$ be the number of its nonzero
$Y$-coordinates.  Then $k\ge2$ and
\begin{equation}\label{eq:rs-circuit}
 \yd(c^+)=\yd(c^-)\le d^{k-2}\le d^\gamma.
\end{equation}
\end{lemma}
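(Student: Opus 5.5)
Let $c=(\kappa,\beta)$ be a circuit of $D$, with $\kappa\in\ZZ^m$ the slack part and $\beta\in\ZZ^s$ the $Y$-part, and let $S=\supp\beta$ with $k=|S|$. The plan is to project away the slack coordinates and reduce to Lemma~\ref{lem:rs-affine}. First, $\beta\ne0$: the slack columns $e_1,\ldots,e_m$ are linearly independent, so a kernel vector with $\beta=0$ is zero. The last row of $D$ gives $\sum_j\beta_j=0$, so $\beta$ has both positive and negative entries. Hence $k\ge2$, and also $\yd(c^+)=\sum_j\beta_j^+=\sum_j\beta_j^-=\yd(c^-)$.

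Next, since $\kappa_i=0$ outside the support, the kernel equation says exactly that
\[
 \sum_{j\in S}\beta_j p_j=-\kappa,\qquad \sum_{j\in S}\beta_j=0,
\]
with $\supp\kappa=:T\subseteq[m]$. Let $\pi$ be the coordinate projection $\RR^m\to\RR^{m\setminus T}$, deleting the coordinates in $T$. Then $\sum_{j\in S}\beta_j\pi(p_j)=0$ is an affine relation among the points $\pi(p_j)$, $j\in S$. Minimality of the support of $c$ gives two facts. First, the configuration $\{\pi(p_j)\}_{j\in S}$ is an affine circuit: any affine relation supported on a proper subset of $S$ lifts, by choosing slack coordinates in $T$, to a kernel vector with strictly smaller support. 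Second, the affine relation on $S$ is unique up to scaling. For the same reason, the gcd of $\beta$ is $1$: the slack entries are determined by $\beta$ through $\kappa=-\sum\beta_jp_j$, so dividing $\beta$ by a common factor keeps $\kappa$ integral, and primitivity of $c$ forces primitivity of $\beta$.

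An affine circuit with $k$ points spans an affine subspace $A$ of dimension $t=k-2$. The main obstacle is that Lemma~\ref{lem:rs-affine} needs the points in $\NN^t$, with coordinate sum at most $d$ and affine hull all of $\RR^t$. To get this, I would choose $t$ coordinates in $[m]\setminus T$ on which the projection restricted to $A$ is injective; they exist because $\dim A=t$ and the coordinate functionals span the dual. Projecting further onto these $t$ coordinates gives points $q_j\in\NN^t$ with $|q_j|\le|p_j|\le d$. Since the projection is an affine isomorphism on $A$, it preserves affine (in)dependence and the relation $\beta$, so $\{q_j\}$ is an affine circuit spanning $\RR^t$ with the same primitive relation $\beta$. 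The case $k=2$ corresponds to $t=0$: two points with equal projections. There the relation is $(1,-1)$ with degree $1=d^0$, which is the $t=0$ clause of Lemma~\ref{lem:rs-affine}.

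Lemma~\ref{lem:rs-affine} then gives $\sum\beta_j^+\le d^{k-2}$. For the final inequality $d^{k-2}\le d^\gamma$, note that the circuit has $k\le s$, and $k-2=t\le m$ because $t$ is the dimension of an affine subspace of $\RR^{m\setminus T}$. So $k-2\le\min\{m,s-2\}=\gamma$, and $d\ge1$ gives the claim.
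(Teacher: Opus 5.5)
Your proof is correct and follows essentially the same route as the paper. You project away the slack coordinates in the support, recognize an affine circuit of dimension $k-2$, project injectively onto $k-2$ retained coordinates to land in $d\Delta_{k-2}$, check primitivity of $c_Y$, and invoke Lemma~\ref{lem:rs-affine}. The only cosmetic difference is that you get $k-2\le m$ from the dimension of the affine hull in the retained coordinates, whereas the paper uses that a circuit of the rank-$(m+1)$ matrix $D$ has at most $m+2$ columns in its support.
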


\begin{proof}
The homogenizing row gives $\sum_j c_{Y_j}=0$.  Since the slack columns
are linearly independent, a nonzero relation uses $Y$-columns, and
there must be at least two of them.

Let $U\subseteq[m]$ index the slack columns in
$\supp(c)$.  Project the $k$ vectors $p_j$ occurring in this
support onto the coordinates not indexed by $U$.
Every affine relation among the projected points lifts uniquely to a
relation on these $Y$-columns and the slack columns in $U$: the
coefficients of the latter columns are determined by the omitted
coordinate equations.  The kernel on the support of a circuit is
one-dimensional.  Otherwise, a linear combination of two independent
relations could be chosen to cancel a nonzero coordinate, producing a
relation with smaller support.  It follows that the projected points
have a one-dimensional affine-relation space.  No proper subset is
affinely dependent, since such a dependence would lift to a relation
with strictly smaller support.  They therefore form an affine circuit
of dimension $k-2$.

Choose $k-2$ of the retained coordinate functions that are linearly
independent on its affine difference space.  Projection to these
coordinates is injective on that difference space and preserves all
affine relations.  The resulting points $p_1,\ldots,p_k$ lie in
$d\Delta_{k-2}$, since projection preserves nonnegativity and does
not increase total coordinate sum.

The vector $c_Y$ is primitive as an integer vector.  Indeed, any common
divisor of its coordinates divides every slack coefficient, those
coefficients being integer linear combinations of $c_Y$; primitivity
of $c$ then forces that divisor to be one.
Lemma~\ref{lem:rs-affine} gives $\yd(c^+)\le d^{k-2}$.
Finally, a circuit of a rank-$(m+1)$ matrix has at most $m+2$ columns in its
support, and $k\le s$.  Thus $k-2\le\gamma$, as required.
\end{proof}

\subsection{Sign-compatible decompositions and proximity}

\begin{lemma}[Circuit decomposition]\label{lem:rs-decomposition}
Every nonzero $w\in\ker_{\RR}D$ has a representation
\begin{equation}\label{eq:rs-decomposition}
 w=\sum_{\nu=1}^{p}\lambda_\nu c^{(\nu)},\qquad
 p\le s-1,\qquad \lambda_\nu>0,
\end{equation}
where the $c^{(\nu)}$ are primitive circuits, sign-compatible with $w$
and supported in $\supp(w)$.
\end{lemma}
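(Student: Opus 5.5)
We use the standard conformal (sign-compatible) circuit decomposition for the polyhedral cone of kernel vectors in a fixed orthant, and then count summands using $\dim_{\RR}\ker_{\RR}D=s-1$ from \eqref{eq:rs-kernel-dimension}. Fix a nonzero $w\in\ker_{\RR}D$ and its sign vector $\sigma\in\{-1,0,1\}^{m+s}$. Consider the cone
\[
 C_\sigma=\{v\in\ker_{\RR}D:\ \sigma_iv_i\ge0\text{ for all }i,\ v_i=0\text{ for }i\notin\supp(w)\}.
\]
This is a pointed polyhedral cone, since it lies in a closed orthant, and it contains $w$.

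First we show that every extreme ray of $C_\sigma$ is spanned by a circuit. Let $v$ span an extreme ray. If some kernel vector $v'$ had support strictly contained in $\supp(v)$, then $v\pm\epsilon v'$ would lie in $C_\sigma$ for small $\epsilon>0$. This uses that $v$ is nonzero with the correct sign on all of $\supp(v)$, and that coordinates outside $\supp(v)$ stay zero. That contradicts extremality unless $v'$ is a multiple of $v$. Hence $\supp(v)$ is inclusion-minimal among kernel supports. The kernel restricted to that support is therefore one-dimensional, which is the argument already used in Lemma~\ref{lem:rs-circuit}. Since $D$ is an integer matrix, $v$ is a positive multiple of a primitive integer circuit $c$. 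This $c$ is sign-compatible with $w$ and supported in $\supp(w)$.

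Second, we bound the number of summands. By Minkowski--Weyl, $w$ is a nonnegative combination of extreme-ray generators of $C_\sigma$. By Carathéodory's theorem for cones, we can choose at most $\dim C_\sigma\le\dim\ker_{\RR}D=s-1$ of them with linearly independent generators and positive coefficients. Dropping zero coefficients gives \eqref{eq:rs-decomposition} with $p\le s-1$.

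Alternatively, the argument can be made elementary by induction on $|\supp(w)|$. Pick a circuit $c$ with $\supp(c)\subseteq\supp(w)$ that is sign-compatible with $w$. Subtract the largest $\lambda c$ keeping sign-compatibility, which kills at least one coordinate. Then recurse on the remainder. The main obstacle is the existence of a sign-compatible circuit inside $\supp(w)$, which is exactly what the cone/extreme-ray argument supplies. For the count $p\le s-1$, the cleanest route is the Carathéodory step. In the inductive version one would instead need to argue that the chosen circuits are linearly independent, because each later remainder vanishes on a coordinate where earlier circuits are nonzero. I expect the cone formulation to be the least error-prone, so I will present that one.
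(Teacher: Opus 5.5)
Your proposal is correct and follows essentially the same argument as the paper. Both take the cone of kernel vectors in the closed orthant of $w$ with coordinates outside $\supp(w)$ set to zero, identify its extreme rays with (primitive, rational) circuits by a perturbation argument, and apply conic Carath\'eodory inside $\ker_{\RR}D$ to obtain $p\le s-1$. The only cosmetic difference is in the perturbation: you use a kernel vector with strictly smaller support, while the paper uses an independent kernel vector on the same support, and these are equivalent.
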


\begin{proof}
Intersect $\ker_{\RR}D$ with the closed orthant determined by the signs
of $w$, and set all coordinates outside $\supp(w)$ equal to zero.
The resulting cone is a pointed rational polyhedral cone.  Its extreme
rays are generated by circuits.  To verify this, a nonzero vector $v$
on such a ray must have a one-dimensional kernel on its support: if
that kernel had dimension at least two, sufficiently small positive and
negative perturbations by an independent kernel vector would stay in
the same orthant and would contradict extremality.  Hence its support
is minimal.  Rationality permits a primitive integer generator of the
ray.  Conversely, a circuit in the cone generates an extreme ray,
because any decomposition in the cone is supported on its support,
where the kernel is one-dimensional.

The cone is generated by its extreme rays.  Conic Carath\'eodory,
applied in $\ker_{\RR}D$, gives a positive combination of at most
$\dim\ker_{\RR}D=s-1$ ray generators.  This proves the lemma.
\end{proof}

\begin{lemma}[Bound for the $Y$-degrees of Graver basis elements]\label{lem:rs-graver}
Every $h\in\mathcal G(D)$ satisfies
\begin{equation}\label{eq:rs-graver}
 \yd(h^+)=\yd(h^-)\le \Gamma.
\end{equation}
\end{lemma}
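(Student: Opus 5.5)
We would prove the bound by the standard argument that a Graver basis element is a sign-compatible combination of circuits with coefficients less than one. Let $h\in\mathcal G(D)$. The equality $\yd(h^+)=\yd(h^-)$ is immediate from the homogenizing row, since $\sum_j h_{Y_j}=0$. For the inequality, apply Lemma~\ref{lem:rs-decomposition} to $w=h$. This writes
\[
 h=\sum_{\nu=1}^{p}\lambda_\nu c^{(\nu)},\qquad p\le s-1,\ \lambda_\nu>0,
\]
where each $c^{(\nu)}$ is a primitive circuit, sign-compatible with $h$ and supported in $\supp(h)$.

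The key step is to show that every $\lambda_\nu<1$. Suppose instead that $\lambda_\mu\ge1$ for some $\mu$. Then $h-c^{(\mu)}=(\lambda_\mu-1)c^{(\mu)}+\sum_{\nu\ne\mu}\lambda_\nu c^{(\nu)}$ is a nonnegative combination of vectors sign-compatible with $h$ and supported in $\supp(h)$. Hence $h-c^{(\mu)}$ lies in the same closed orthant as $h$. Coordinatewise, $|h_i|=|c^{(\mu)}_i|+|(h-c^{(\mu)})_i|$, because all summands share the sign of $h_i$. So $c^{(\mu)}\sqsubseteq h$. Since $c^{(\mu)}$ is a nonzero integer kernel vector, minimality of $h$ gives $c^{(\mu)}=h$. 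In that case the bound holds directly from Lemma~\ref{lem:rs-circuit}, because $d^\gamma\le\Gamma$. So we may assume every $\lambda_\nu<1$.

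Sign compatibility also makes the $Y$-degrees of positive parts additive: $h^+=\sum_\nu\lambda_\nu (c^{(\nu)})^+$. Taking $Y$-degrees and using Lemma~\ref{lem:rs-circuit},
\[
 \yd(h^+)=\sum_{\nu=1}^p\lambda_\nu\,\yd\bigl((c^{(\nu)})^+\bigr)<\sum_{\nu=1}^p d^{\gamma}\le (s-1)d^\gamma=\Gamma .
\]
In particular the bound $\le\Gamma$ holds. The additivity uses only that all summands lie in the closed orthant of $h$, so the positive parts add without cancellation.

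The main point to check is the minimality step, namely that $c^{(\mu)}\sqsubseteq h$ in the conformal sense, with coordinatewise bounds, and not merely in the same orthant. The remark in the paper distinguishing sign compatibility from conformality flags exactly this. The identity $|h_i|=|c^{(\mu)}_i|+|(h-c^{(\mu)})_i|$ for a same-sign decomposition is what supplies the coordinatewise bound.
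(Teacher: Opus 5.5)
Your proposal is correct and is essentially the paper's proof. You decompose $h$ by Lemma~\ref{lem:rs-decomposition} and note that a coefficient $\lambda_\nu\ge1$ forces $c^{(\nu)}\sqsubseteq h$, hence $h=c^{(\nu)}$, which Lemma~\ref{lem:rs-circuit} bounds; otherwise you add the $Y$-degrees of the positive parts without cancellation to get $\yd(h^+)<(s-1)d^\gamma$. Your identity $|h_i|=|c^{(\mu)}_i|+|(h-c^{(\mu)})_i|$ simply spells out the paper's one-line claim that sign compatibility gives $(c^{(\nu)})^\pm\le h^\pm$, and, as in the paper, the strict inequality implicitly uses $\yd((c^{(\nu)})^+)\ge1$, which holds because $k\ge2$ in Lemma~\ref{lem:rs-circuit}.
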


\begin{proof}
Apply Lemma~\ref{lem:rs-decomposition} to $h$.
If some $\lambda_\nu\ge1$, sign compatibility gives
\[
 (c^{(\nu)})^+\le h^+,\qquad
 (c^{(\nu)})^-\le h^-.
\]
Minimality under $\sqsubseteq$ forces $h=c^{(\nu)}$, and the conclusion
follows from Lemma~\ref{lem:rs-circuit}.
Otherwise all coefficients are less than one.  Since no coordinates
cancel in the decomposition,
\[
 \yd(h^+)
 =\sum_{\nu=1}^{p}\lambda_\nu\yd((c^{(\nu)})^+)
 <p d^\gamma\le(s-1)d^\gamma=\Gamma.
\]
The two $Y$-degrees agree by the homogenizing row of $D$.
\end{proof}

\begin{lemma}[Proximity to the linear relaxation]\label{lem:rs-proximity}
Assume $F_q\ne\varnothing$ and $n\ge1$.
For every real linear cost and every cost-minimizing point
$z\in P_q$, there is an integral cost minimum $u\in F_q$ such that
\begin{equation}\label{eq:rs-proximity-norm}
 \frac12\|u_Y-z_Y\|_1
 =\sum_j(u_{Y_j}-z_{Y_j})^+<\Gamma.
\end{equation}
Moreover,
\begin{equation}\label{eq:rs-proximity}
 \sum_{j\notin\supp(z_Y)}u_{Y_j}\le \Gamma-1.
\end{equation}
\end{lemma}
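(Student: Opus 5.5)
The plan is the classical Cook--Gerards--Schrijver--Tardos proximity argument, with Lemma~\ref{lem:rs-decomposition} and Lemma~\ref{lem:rs-circuit} taking the place of subdeterminant bounds.

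\emph{Choice of $u$.} Since $F_q$ is nonempty and finite, an integral cost minimum exists. Among all integral minima $u\in F_q$, choose one minimizing $\sum_j(u_{Y_j}-z_{Y_j})^+$; if several, minimize $\|u-z\|_1$. The equality in \eqref{eq:rs-proximity-norm} holds because both $u$ and $z$ have $Y$-coordinates summing to $n$. Hence the positive and negative parts of $u_Y-z_Y$ have equal sums.

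\emph{Decomposition.} Put $w=u-z\in\ker_{\RR}D$ and assume $w_Y\ne0$. Lemma~\ref{lem:rs-decomposition} gives $w=\sum_{\nu\le p}\lambda_\nu c^{(\nu)}$ with $p\le s-1$, $\lambda_\nu>0$, and primitive circuits sign-compatible with $w$ and supported in $\supp(w)$. Suppose some $\lambda_\nu\ge1$ and set $c=c^{(\nu)}$.
\begin{itemize}
\item The point $u'=u-c$ is integral. It lies in $F_q$: where $c_i>0$ we have $0<c_i\le w_i\le u_i$, and where $c_i<0$ the coordinate only increases.
\item The point $z'=z+c$ lies in $P_q$: where $c_i<0$ we have $|c_i|\le|w_i|=z_i-u_i\le z_i$.
\end{itemize}
Optimality of $u$ and $z$ forces $\cost(c)\le0$ and $\cost(c)\ge0$, so $\cost(c)=0$ and $u'$ is again an integral minimum. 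Its $Y$-coordinates move toward $z_Y$ in the sign pattern of $w$. The quantity $\sum_j(u_{Y_j}-z_{Y_j})^+$ drops by $\yd(c^+)$, which is at least $1$ by Lemma~\ref{lem:rs-circuit}. This contradicts the choice of $u$. If $c$ had no $Y$-part it could not be a circuit, since the slack columns are independent.

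\emph{Bounding $\sum_j(u_{Y_j}-z_{Y_j})^+$.} All $\lambda_\nu<1$. No cancellation occurs in the sign-compatible sum, so Lemma~\ref{lem:rs-circuit} gives
\[
\sum_j(u_{Y_j}-z_{Y_j})^+=\sum_\nu\lambda_\nu\yd((c^{(\nu)})^+)<p\,d^\gamma\le\Gamma .
\]
This proves \eqref{eq:rs-proximity-norm}.

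\emph{Proof of \eqref{eq:rs-proximity}.} For $j\notin\supp(z_Y)$ we have $u_{Y_j}=(u_{Y_j}-z_{Y_j})^+$. Hence $\sum_{j\notin\supp(z_Y)}u_{Y_j}$ is at most the left side of \eqref{eq:rs-proximity-norm}, which is strictly less than $\Gamma$. This sum is an integer and $\Gamma$ is an integer, so the sum is at most $\Gamma-1$.

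The step most likely to need care is the exchange argument with $\lambda_\nu\ge1$. One must check that subtracting a full circuit keeps both $u-c$ and $z+c$ feasible using only sign-compatibility and $|c_i|\le|w_i|$, and that it strictly decreases the chosen potential. The key point is that $\lambda_\nu\ge1$, positive coefficients and sign-compatibility together give $|c_i|\le|w_i|$ coordinatewise.
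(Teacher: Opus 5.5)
Your proof is correct and follows the paper's argument: pick an integral optimum near $z$, rule out any circuit with $\lambda_\nu\ge1$ in the decomposition of $w=u-z$ by the exchange $u-c\in F_q$, $z+c\in P_q$ with $\cost(c)=0$, then sum the fractional coefficients against Lemma~\ref{lem:rs-circuit} and use integrality for \eqref{eq:rs-proximity}. The only difference is the potential: the paper minimizes $\|u-z\|_1$, so any nonzero conformal integer kernel vector gives a contradiction, whereas you minimize $\sum_j(u_{Y_j}-z_{Y_j})^+$, which needs your extra observation $\yd(c^+)\ge1$ and makes your tie-break by $\|u-z\|_1$ unnecessary.
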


\begin{proof}
The integral fiber is finite.  Among its cost minima choose $u$
minimizing $\|u-z\|_1$, and put $w=u-z\in\ker_{\RR}D$.
There is no nonzero integer kernel vector $h$ conformal to $w$.
Indeed,
\[
 h^+\le w^+\le u,\qquad h^-\le w^-\le z,
\]
so $u-h\in F_q$ and $z+h\in P_q$.
Optimality of $u$ and $z$ gives, respectively,
$\cost(h)\le0$ and $\cost(h)\ge0$.
Consequently $u-h$ is another integral cost minimum, whereas
\[
 \|u-h-z\|_1=\|u-z\|_1-\|h\|_1<\|u-z\|_1,
\]
a contradiction.

If $w\ne0$, apply Lemma~\ref{lem:rs-decomposition} to $w$.
Every coefficient in the resulting decomposition is strictly less
than one; otherwise a circuit is an excluded integer kernel vector
conformal to $w$.  Lemma~\ref{lem:rs-circuit} now gives
\[
 \yd(w^+)<(s-1)d^\gamma=\Gamma.
\]
The last row of $Dw=0$ gives
$\yd(w^+)=\yd(w^-)=\frac12\|u_Y-z_Y\|_1$,
which proves~\eqref{eq:rs-proximity-norm}.
For $j\notin\supp(z_Y)$, we have $u_{Y_j}=w_{Y_j}\ge0$.
Thus the sum in~\eqref{eq:rs-proximity} is an integer strictly less
than $\Gamma$, proving the bound.  If $w=0$, both assertions are immediate.
\end{proof}

\begin{remark}\label{rem:rs-proximity}
This estimate uses $u-z$, which lies in the real kernel of $D$,
rather than $u-\lfloor z\rfloor$.  It therefore requires no correction
for fractional slack coordinates.  It bounds the total number of
$Y$-occurrences outside the support of $z_Y$, not the support size
of an integral optimum.  The factor $s-1$ enters through
\eqref{eq:rs-kernel-dimension}.
\end{remark}

\subsection{A column-independent Graver degree bound}

We also bound Graver degrees independently of the number of
columns. Let
\[
 \sigma_m=m-1+\frac1m,\qquad
 B_\triangle=\Delta_m-\Delta_m.
\]
The origin-symmetric convex body $B_\triangle$ is the unit ball
of the norm
\begin{equation}\label{eq:triangle-norm}
 \|v\|_\triangle=\max\{\textstyle\sum_i v_i^+,
                                  \textstyle\sum_i v_i^-\}.
\end{equation}
This norm takes integer values on $\ZZ^m$.

\begin{lemma}[Steinitz rearrangement]\label{lem:Steinitz}
Let $B\subset\RR^m$ be an origin-symmetric convex body. If
$v_1,\ldots,v_\nu\in B$ and $\sum_jv_j=0$, there is an ordering
whose partial sums belong to $\sigma_mB$.
\end{lemma}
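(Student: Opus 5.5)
We follow the classical Grinberg--Sevast'yanov proof of the Steinitz lemma, which gives the constant $m$ for arbitrary norms, and then sharpen it to $\sigma_m=m-1+\frac1m$ in the last step.

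\textbf{Step 1: reduction to $\nu>m$.} If $\nu\le m$, any ordering will do. A partial sum of $k$ vectors lies in $kB$, and it also equals minus the sum of the remaining $\nu-k$ vectors, so it lies in $\min\{k,\nu-k\}B$. Since $\min\{k,\nu-k\}\le\nu/2\le\sigma_m$ for $m\ge2$, we are done in this case. For $m=1$ we have $\sigma_1=1$, and the one-dimensional statement is elementary: add a nonnegative term while the current partial sum is $\le0$, and a negative term otherwise.

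\textbf{Step 2: the backward LP construction.} For $k=\nu,\nu-1,\ldots,m$ we build nested index sets $J_k$ with $|J_k|=k$. Along with them we keep coefficients $\lambda^{(k)}\in[0,1]^{J_k}$ satisfying
\[
 \sum_{j\in J_k}\lambda^{(k)}_j v_j=0,\qquad \sum_{j\in J_k}\lambda^{(k)}_j=k-m.
\]
Start with $J_\nu=[\nu]$ and all coefficients equal to $(\nu-m)/\nu$. To pass from $J_k$ to $J_{k-1}$, look at the polytope of $\lambda$ in $[0,1]^{J_k}$ satisfying the $m$ linear equations together with $\sum_j\lambda_j=k-1-m$. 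It is nonempty, since it contains $\frac{k-1-m}{k-m}\lambda^{(k)}$. A vertex of it has at most $m+1$ coordinates outside $\{0,1\}$. A counting argument then shows that some coordinate is $0$: the coordinates sum to $k-1-m$, and if none were $0$ there would be at least $k-m-1$ ones, with the remaining fractional ones adding positive mass. Delete that index to obtain $J_{k-1}$. Finally, order the vectors so that $J_k$ consists of the first $k$ indices for every $k\ge m$, and put the first $m$ indices in any order.

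\textbf{Step 3: bounding the partial sums.} For $k\ge m$,
\[
 \sum_{j\in J_k}v_j=\sum_{j\in J_k}\bigl(1-\lambda^{(k)}_j\bigr)v_j,
\]
and the weights $1-\lambda^{(k)}_j\in[0,1]$ sum to $m$. By convexity of $B$ this gives the bound $m$. The partial sums for $k<m$ are at most $k\le m-1$ by the triangle inequality.

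\textbf{Step 4: improving $m$ to $\sigma_m$ (the main obstacle).} Here the vertex structure has to be exploited more carefully. At a vertex, at most $m$ coefficients are fractional, because the equation $\sum_j\lambda_j=$ integer prevents exactly one fractional coefficient among $m+1$. So the weights $1-\lambda_j$ consist of integer ones together with at most $m$ fractional values. The plan is to choose the deleted index at each stage to be the index carrying the largest weight $1-\lambda_j$, and then bound
\[
 \Bigl\|\sum_j(1-\lambda_j)v_j\Bigr\|_B
\]
by combining two estimates. The first is the convexity bound $\sum_j(1-\lambda_j)=m$. The second uses the symmetric representation of the same partial sum, namely minus the complementary sum, together with $\sum_j v_j=0$. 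Balancing these two estimates should produce $m-1+\frac1m$. This balancing is the delicate point: the precise choice of vertex and the case split between "at least one coordinate equal to $1$" and "all $m$ fractional" need care. If it fails, the fallback is to prove the lemma with the constant $m$ and propagate that constant through the later bounds. This costs only a constant factor in the $d^{r-1}$ estimate, but it would change the stated constant.
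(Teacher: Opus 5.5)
Your Steps 1--3 are a correct version of the Grinberg--Sevast'yanov argument, but they prove the lemma only with constant $m$. The content of the statement beyond that classical bound is the constant $\sigma_m=m-1+\frac1m$, and Step 4 does not establish it. It only says the balancing ``should'' work, and it offers a fallback that changes the statement. The one concrete claim in Step 4 is also false. At a vertex of $\{\lambda\in[0,1]^{J_k}:\sum_j\lambda_jv_j=0,\ \sum_j\lambda_j=k-1-m\}$, the fractional coordinates need only index linearly independent columns $(v_j,1)$, so up to $m+1$ of them may be fractional. Integrality of $\sum_j\lambda_j$ rules out exactly \emph{one} fractional coordinate, not $m+1$. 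For instance, let $v_1,\dots,v_{m+1}$ be affinely independent with $\sum_jv_j=0$, and let $v_{m+2}=0$. In passing from $J_{m+2}$ to $J_{m+1}$, the point $(\frac1{m+1},\dots,\frac1{m+1},0)$ is then a vertex with $m+1$ fractional coordinates.

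More fundamentally, no sharper estimate of the same expression can succeed. Once $J_k$ is fixed, $S_k=\sum_{j\in J_k}v_j$ is a fixed vector. Rewriting it as $\sum_j(1-\lambda_j)v_j$, or as minus the complementary sum of $\nu-k$ vectors, does not shorten it. The construction of Steps 2--3 can in fact produce $\|S_k\|_B=m$. Take $m=2$ and $(v_1,v_2,v_3,v_4)=(a,a,-a,-a)$ with $\|a\|_B=1$. When passing to $J_3$, the vertex $(\frac12,0,\frac12,0)$ is admissible and index $2$ is deleted. At the next step the polytope is $\{0\}$, so deleting index $1$ is admissible. This gives $J_2=\{3,4\}$ and $S_2=-2a$, of norm $2>\frac32=\sigma_2$. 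Read with the weights of the new vertex, your deletion rule permits exactly this run, and in any reading it is not shown to keep every $S_k$ in $\sigma_mB$.

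What is missing is a genuinely different selection of the nested sets. The paper does not reprove this. It applies Sevast'yanov's compact-summation theorem \cite[Corollary~2, p.~67]{Sev} with $a=0$ and $B_0=\conv\{v_1,\ldots,v_\nu\}$. This gives an ordering whose partial sums lie in $(m-1)B_0+\conv(\{0\}\cup(-B_0/m))$, and that set is contained in $\sigma_mB$ because $B_0\subseteq B$ and $B$ is symmetric and convex. You should either cite that theorem or prove it. The fallback constant $m$ does not give the lemma as stated. As you note, the derivation of the coefficient $(4e)^{r-1}/4$, which uses $\sigma_m$, would then have to be redone.
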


\begin{proof}
This is~\cite[Corollary~2, p.~67]{Sev}. Equivalently, apply the
compact-summation theorem there with $a=0$ and
$B_0=\conv\{v_1,\ldots,v_\nu\}$. It places the partial sums in
\[
 (m-1)B_0+\conv\bigl(\{0\}\cup(-B_0/m)\bigr)
 \subseteq\left(m-1+\frac1m\right)B.
\]
For the classical constant $m$ and its use in integer programming,
see~\cite[Section~1.2]{EW}.
\end{proof}
\begin{lemma}[Cyclic counting]\label{lem:cycles}
Suppose a finite multiset of integer vectors has no
nonempty zero-sum submultiset. Add $k\ge1$ auxiliary integer vectors
so that the total sum is zero. If the combined vectors admit an
ordering whose partial sums belong to one set of $Q$ lattice
points, then the number of vectors in the original multiset is at
most $k(Q-1)$.
\end{lemma}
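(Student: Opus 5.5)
The plan is to use a pigeonhole argument on the prefixes of the given ordering, with the auxiliary vectors acting as separators. Let the original multiset consist of $N$ vectors $a_1,\ldots,a_N$. Add the auxiliary vectors $b_1,\ldots,b_k$, and fix the ordering of all $N+k$ vectors given by the hypothesis. Write $S_0=0,S_1,\ldots,S_{N+k}=0$ for the partial sums. Every $S_i$ lies in a fixed set $L$ with $|L|=Q$; here $S_0=0$ and $S_{N+k}=0$ are included, and we may regard $0$ as one of the $Q$ points.

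The key step is to cut the cyclic sequence at the auxiliary vectors. View the ordering cyclically: since the total sum is zero, $S_{N+k}=S_0$. The $k$ auxiliary vectors split the cyclic sequence into at most $k$ maximal blocks of consecutive original vectors $a_j$. Some blocks may be empty, and the block containing the wrap-around position is treated as one block by cyclicity.

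Now consider one block. Suppose it consists of $\ell$ consecutive original vectors, occupying positions $i+1,\ldots,i+\ell$ in the cyclic order. I would look at the $\ell+1$ partial sums $S_i,S_{i+1},\ldots,S_{i+\ell}$. For a block crossing the end of the sequence, the indices are read modulo $N+k$, with partial sums taken cyclically. This is legitimate because a cyclic shift of a zero-sum sequence only translates all partial sums by a constant, namely $-S_{\text{start}}$, so distinctness of partial sums is preserved. More simply, I can use the identification $S_{N+k}=S_0$ directly.

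Within a block, the sums must be pairwise distinct. If $S_a=S_b$ with $a<b$ inside the block, then $a_{a+1}+\cdots+a_b=0$. That is a nonempty zero-sum submultiset of the original multiset, which is forbidden. So the block yields $\ell+1$ distinct points of $L$, giving $\ell+1\le Q$, that is, $\ell\le Q-1$. Summing over the at most $k$ blocks gives $N\le k(Q-1)$.

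The only delicate point is the wrap-around block. I would handle it by rotating the sequence so that it begins immediately after an auxiliary vector. This is possible since $k\ge1$. After the rotation, every block of original vectors lies between consecutive auxiliary vectors, or between the last auxiliary vector and the end.

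The rotated partial sums are $S'_i=S_{i+c}-S_c$. These lie in the translate $L-S_c$, which still has $Q$ points. The argument above therefore applies verbatim, and no block wraps around.
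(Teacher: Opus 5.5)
Your proof is correct and follows the paper's argument. Close the sequence into a cycle via $S_{N+k}=S_0$ and cut it at the $k$ auxiliary vectors into $k$ possibly empty arcs; an arc of $\ell$ original vectors carries $\ell+1$ pairwise distinct partial sums from the $Q$-point set, so $\ell\le Q-1$, and summing over the arcs gives the bound. Your translation by $-S_c$ is only an equivalent way to treat the wrap-around arc, which the paper handles directly by identifying the final and initial positions. (After your rotation the last vector is auxiliary, so the blocks are the initial one and those between consecutive auxiliary vectors rather than ``between the last auxiliary vector and the end,'' but this does not affect the count.)
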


\begin{proof}
Identify the final partial-sum position with the initial one.
The $k$ auxiliary edges divide the resulting cycle into $k$,
possibly empty, arcs containing no auxiliary edges. An arc of $a$
edges has
$a+1$ distinct partial-sum labels: equal labels would give a
nonempty zero-sum submultiset of vectors from the original multiset.
Every label
belongs to the same set of $Q$ lattice points, so $a\le Q-1$.
For the arc passing through the final and initial positions,
the same labels are used: the final partial sum equals the initial
one because the total sum is zero. Thus equality of two labels on
this arc also gives a zero-sum submultiset. Summing over the arcs
gives the result.
Zero auxiliary vectors are allowed.
\end{proof}

\begin{lemma}[Decomposition of integer vectors]\label{lem:signedmass}
Let $d,k\ge1$ be integers.
If $v\in\ZZ^m$ satisfies
\[
 \sum_i v_i^+\le kd,\qquad \sum_i v_i^-\le kd,
\]
then $v=v^{(1)}+\cdots+v^{(k)}$ for integer vectors $v^{(j)}$
with $\|v^{(j)}\|_\triangle\le d$ for every $j$.
\end{lemma}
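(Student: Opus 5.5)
We argue by a greedy peeling construction. Write $P=\sum_i v_i^+$ and $N=\sum_i v_i^-$, both nonnegative integers at most $kd$. The natural plan is to split the positive part and the negative part separately into $k$ pieces of mass at most $d$ each, and then pair them. For the positive part, list the positive mass of $v$ as a multiset of unit vectors: $v_i^+$ copies of $e_i$ for each $i$, ordered arbitrarily. Cut this list into $k$ consecutive blocks of length at most $d$; this is possible because the total length $P$ is at most $kd$. Some blocks may be empty. Summing each block gives vectors $a^{(1)},\ldots,a^{(k)}\in\NN^m$ with $|a^{(j)}|\le d$ and $\sum_j a^{(j)}=v^+$. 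Do the same for $v^-$ to get $b^{(1)},\ldots,b^{(k)}\in\NN^m$ with $|b^{(j)}|\le d$ and $\sum_j b^{(j)}=v^-$.

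Now set $v^{(j)}=a^{(j)}-b^{(j)}$. These are integer vectors, and they sum to $v^+-v^-=v$. It remains to check $\|v^{(j)}\|_\triangle\le d$ using~\eqref{eq:triangle-norm}. Coordinatewise we have $(a_i-b_i)^+\le a_i$ and $(a_i-b_i)^-\le b_i$ for nonnegative $a_i,b_i$. Hence
\[
 \textstyle\sum_i (v^{(j)}_i)^+\le|a^{(j)}|\le d
 \quad\text{and}\quad
 \textstyle\sum_i (v^{(j)}_i)^-\le|b^{(j)}|\le d .
\]
So the maximum of the two sums is at most $d$, as required.

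There is no real obstacle. The only point needing care is that a coordinate may have $v_i^+$ and $v_i^-$ both nonzero only if $v_i=0$, which it cannot, since $v^+$ and $v^-$ have disjoint supports. Even without this, the coordinatewise inequalities above handle any cancellation. The hypothesis $k\ge1$ ensures that the blocks exist, and empty blocks give zero summands, which are allowed.
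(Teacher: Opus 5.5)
Your proposal is correct and is essentially the paper's own argument. You split $v^+$ and $v^-$ separately into $k$ bins of at most $d$ unit vectors each, and you subtract bin by bin. Your coordinatewise check $(a_i-b_i)^+\le a_i$, $(a_i-b_i)^-\le b_i$ makes explicit the paper's remark that both sign sums of each summand are at most $d$.
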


\begin{proof}
Represent $v^+$ and $v^-$ separately as multisets of coordinate
unit vectors. Partition each multiset into $k$ labelled bins,
each containing at most $d$ units; empty bins are allowed.
For each label, subtract the sum of the negative units from the
sum of the positive units in that bin.
The resulting integer vector has both sign sums at most $d$.
These $k$ vectors sum to $v$.
\end{proof}

For an integer $\Lambda\ge0$, define
\begin{equation}\label{eq:Phi}
 \Phi_m(\Lambda)=
 \sum_{\substack{p,q\ge0\\p+q\le m}}
 \frac{m!}{p!q!(m-p-q)!}
 \binom{\Lambda}{p}\binom{\Lambda}{q},
\end{equation}
with $\binom{\Lambda}{p}=0$ when $p>\Lambda$. Choosing the
disjoint supports of the positive and negative coordinates shows
that this is the number of lattice points in $\Lambda B_\triangle$:
positive entries on a fixed support of size $p$ with sum at most
$\Lambda$ can be chosen in $\binom{\Lambda}{p}$ ways, and the
negative entries are counted similarly. Put
\begin{equation}\label{eq:lattice-parameters}
 \Lambda=\lfloor\sigma_md\rfloor,\qquad Q=\Phi_m(\Lambda).
\end{equation}

\begin{lemma}\label{lem:primitive}
Every $h\in\mathcal G(D)$ satisfies
\[
 \yd(h^+)=\yd(h^-)\le Q.
\]
\end{lemma}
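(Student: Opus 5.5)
Let $h\in\mathcal G(D)$ and put $n=\yd(h^+)=\yd(h^-)$; the two agree by the homogenizing row of $D$. We may assume $n\ge1$, since a nonzero kernel vector must use $Y$-columns. The plan is to pair positive and negative generator occurrences, apply Lemma~\ref{lem:Steinitz} in dimension $m$ with $B=dB_\triangle$, and then count with Lemma~\ref{lem:cycles}.

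\emph{Pairing.} List the positive $Y$-occurrences of $h$ as a multiset $a_1,\ldots,a_n$ of indices, with $j$ repeated $h_{Y_j}^+$ times. List the negative occurrences likewise as $b_1,\ldots,b_n$. Form the vectors $v_i=p_{a_i}-p_{b_i}\in d(\Delta_m-\Delta_m)=dB_\triangle$. Since $Dh=0$, the first $m$ rows give $\sum_i v_i+\kappa=0$, where $\kappa=h_X\in\ZZ^m$ is the slack part of $h$. Each slack column is a unit vector, so $\sum\kappa_i^+$ and $\sum\kappa_i^-$ are bounded by the total positive and negative coordinate masses of $\sum_i v_i$. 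Naively these masses are at most $nd$, which is too weak. The intended handling is to bound $\|\kappa\|_\triangle$ by $kd$ for a small $k$, decompose $-\kappa$ via Lemma~\ref{lem:signedmass} into $k$ auxiliary integer vectors in $dB_\triangle$, and then invoke Lemma~\ref{lem:cycles}. That lemma gives only $n\le k(Q-1)$, so achieving $\yd\le Q$ requires essentially $k=1$.

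\emph{Alternative route, which I would actually try first.} Treat the slack contributions as individual unit vectors $\pm e_i$, and merge each of them into one of the $v_i$ so that every merged vector still lies in $dB_\triangle$. Let $w_i=v_i+(\text{assigned slack units})$. We need $\sum_i w_i=0$, with every $w_i$ satisfying both sign sums at most $d$. This should be possible because $\kappa$ is forced by the support structure: the positive slack entries compensate the negative mass of $\sum v_i$ coordinatewise. A greedy assignment of units to pairs with room, using the freedom in choosing the pairing $a_i\leftrightarrow b_i$, should make it work. Lemma~\ref{lem:Steinitz} then orders $w_1,\ldots,w_n$ so that all partial sums lie in $\sigma_m dB_\triangle$. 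Since the partial sums are integer vectors and $\|\cdot\|_\triangle$ is integer-valued on $\ZZ^m$, they lie in $\Lambda B_\triangle\cap\ZZ^m$, a set of $Q$ points by \eqref{eq:Phi}.

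\emph{Counting.} If two partial sums, including the empty one, coincide, the consecutive block between them has zero sum. That block yields a kernel vector $h'$ conformal to $h$: its $Y$-part is the corresponding sub-multisets of positive and negative occurrences, and its slack part is the assigned units. This $h'$ is nonzero and, if the block is proper, strictly smaller than $h$, contradicting minimality in $\mathcal G(D)$. Hence the $n$ partial sums $S_1,\ldots,S_n$ together with $S_0=0$ would be $n+1$ distinct points, giving $n\le Q-1$. Cyclically, $S_n=S_0$ is allowed, so the precise count is $n\le Q$, in the spirit of Lemma~\ref{lem:cycles} with the cycle closing at $0$.

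\emph{Main obstacle.} The hard step is absorbing the slack part $\kappa$ into the pair vectors while keeping each inside $dB_\triangle$ and keeping the sub-relations conformal. Conformality needs each absorbed slack unit to have the sign of $\kappa_i$. This holds automatically if units are taken from $\kappa$ itself. The remaining point is to check, by a per-coordinate mass argument, that there is enough room. If it fails, I would fall back to a single auxiliary vector $-\kappa$ and then argue separately that $\|\kappa\|_\triangle\le d$ can be arranged.
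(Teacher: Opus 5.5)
Your skeleton matches the paper's proof: pair positive with negative $Y$-occurrences, apply Lemma~\ref{lem:Steinitz} in $dB_\triangle$, use integrality of $\|\cdot\|_\triangle$ to land in $\Lambda B_\triangle$, and count distinct partial sums before the return to $0$. As written, however, there is a gap exactly at the step you flag.
\begin{itemize}
\item You only assert that absorbing the slack part $\kappa=h_X$ ``should work''. You do not prove it, and the freedom in choosing the pairing is not what makes it work.
\item Your fallback is false. The vector $\kappa$ is determined by $h$, and its $\triangle$-norm can exceed $d$. Take $m=2$, $d\ge5$, $p_1=(d,0)$, $p_2=(0,d)$, $p_3=(1,1)$, $h_Y=(1,1,-2)$ and $h_X=(2-d,2-d)$. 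This is a Graver element, because the only intermediate $Y$-parts $(1,0,-1)$ and $(0,1,-1)$ force slack parts $(1-d,1)$ and $(1,1-d)$, which are not conformal to $h_X$. Yet $\|h_X\|_\triangle=2d-4>d$.
\item Your first route via auxiliary vectors and Lemma~\ref{lem:cycles} is the wrong tool here. Proposition~\ref{prop:matrix-proximity} needs auxiliary vectors only because $l$ is not a kernel vector there, so the vectors closing up the sum are not coordinates of $l$. Here $h\in\ker D$, and the slack units are genuine coordinates of $h$.
\end{itemize}

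The paper closes the gap more simply. It puts each signed slack unit $\operatorname{sign}(\kappa_i)e_i$ into the multiset as its own vector, of $\triangle$-norm $1\le d$.
\begin{itemize}
\item Slack units of one coordinate all share a sign, so any nonempty zero-sum submultiset contains a pair vector. It therefore restores to a nonzero subrelation conformal to $h$.
\item That subrelation is proper when the submultiset is proper, since the complement is again a nonempty zero-sum submultiset.
\item Steinitz and distinctness of the partial sums before the return to $0$ then give $n+\|h_X\|_1\le Q$, so a fortiori $n\le Q$.
\end{itemize}

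Alternatively, you can complete your merging with any pairing by a per-coordinate argument. Since $\kappa_c=-\sum_i(v_i)_c$, if $\kappa_c>0$ then $\sum_i(v_i)_c^-\ge\kappa_c$. Distribute the $\kappa_c$ units $+e_c$ among indices with $(v_i)_c<0$, giving at most $(v_i)_c^-$ to index $i$; handle $\kappa_c<0$ symmetrically. Each $w_i$ then lies coordinatewise between $0$ and $v_i$, so $\|w_i\|_\triangle\le\|v_i\|_\triangle\le d$. Your counting argument then goes through unchanged.
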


\begin{proof}
The equality follows from the last row of $Dh=0$. Pair every
positive $Y$-occurrence with a negative one, recording a pair of
types $j,l$ as the vector $p_j-p_l$. Record each signed slack
occurrence as a signed unit vector. These integer vectors sum to
zero and have $\triangle$-norm at most $d$, because each $p_j$
has nonnegative coordinates with sum at most $d$.

A nonempty proper zero-sum submultiset would restore to a nonzero
proper integer kernel vector conformal to $h$. No cancellation
occurs on restoration, since the positive and negative coordinate
supports of $h$ are disjoint. Such a subrelation contradicts
Graver minimality. Lemma~\ref{lem:Steinitz} orders the vectors
with partial sums in $\sigma_mdB_\triangle$. Their norms are
integers, so all partial sums lie in $\Lambda B_\triangle$.
The partial sums preceding the final return to zero are distinct;
otherwise there would be a proper zero-sum submultiset. The number
of vectors, and therefore the number of paired $Y$-occurrences,
is at most $Q$.
\end{proof}

\subsection{Proximity independent of the number of columns}

\begin{proposition}\label{prop:matrix-proximity}
Assume $F_q\ne\varnothing$ and $n\ge1$, and let $z$ be a cost-minimizing
vertex of $P_q$ for an arbitrary real linear cost. Put
\[
 t=|\supp(z_Y)|,\qquad
 d_0=n-\sum_j\lfloor z_{Y_j}\rfloor,
\]
and let $c_0$ be the number of fractional slack coordinates of $z$.
Then $0\le d_0\le t-1$. If $d_0=0$, the vertex $z$ is integral
and one may take $u=z$.
If $d_0>0$, put $k=d_0+\max\{0,c_0-1\}$. There is an integral
cost minimum $u\in F_q$ such that, for
$l=u-\lfloor z\rfloor$,
\begin{equation}\label{eq:matrix-proximity}
 \sum_j(l_{Y_j})^-\le k(Q-1),\qquad
 \sum_j(l_{Y_j})^+\le k(Q-1)+d_0.
\end{equation}
In either case, an integral cost minimum can be chosen with
\begin{equation}\label{eq:matrix-proximity-norm}
 \|u_Y-z_Y\|_1\le2mQ.
\end{equation}
\end{proposition}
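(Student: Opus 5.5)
We follow the proof of Lemma~\ref{lem:rs-proximity}, with $l=u-\lfloor z\rfloor$ in place of $u-z$, and count with Lemmas~\ref{lem:Steinitz} and~\ref{lem:cycles} as in Lemma~\ref{lem:primitive}.

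\emph{Preliminary facts.} Since $z$ is a vertex, the columns of its support are independent. Write $f=z-\lfloor z\rfloor\in[0,1)^{m+s}$. The last row of $Dz=q$ gives $\sum_j z_{Y_j}=n$. Hence $d_0=\sum_j f_{Y_j}$ is a nonnegative integer. It is a sum of $t$ numbers in $[0,1)$, so it is less than $t$, and thus $d_0\le t-1$. If $d_0=0$, every $z_{Y_j}$ is an integer. Then the slack block $z_X=b-\sum_j z_{Y_j}p_j$ is integral, so $z\in F_q$ is an integral minimum, and we take $u=z$. In this case $\|u_Y-z_Y\|_1=0$.

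\emph{Choice of $u$ when $d_0>0$.} Among the integral cost minima choose $u$ minimizing $\|u-z\|_1$, and put $w=u-z=l-f$. The argument of Lemma~\ref{lem:rs-proximity} shows that no nonzero integer kernel vector is conformal to $w$. We need the analogous statement for $l$: no nonzero $h\in\ker_\ZZ D$ satisfies $h^+\le l^+$ and $h^-\le l^-$. For such an $h$ we would have $h^+\le l^+\le u$, so $u-h\in F_q$. Also $u\ge0$ gives $l^-\le\lfloor z\rfloor$, so $z+h\ge z-l^-\ge0$ and $z+h\in P_q$. The optimality and distance contradiction then runs verbatim. The distance check $\|u-h-z\|_1<\|u-z\|_1$ has to be redone coordinatewise, because $w$ and $l$ differ by $f$. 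I expect this to be the delicate point: on coordinates where $0<h_i\le l_i$ we may have $w_i<h_i$, so the one-line identity of Lemma~\ref{lem:rs-proximity} no longer applies directly. If needed, I would minimize a tie-broken distance to $\lfloor z\rfloor$ inside the set of minima nearest $z$.

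\emph{Building the multiset of vectors in $\RR^m$.} The last row of $Dl=D f$ gives $\sum_j l_{Y_j}=d_0$. Pair every negative $Y$-occurrence of $l$ with a positive one, recorded as $p_j-p_{j'}$. This leaves $d_0$ unpaired positive occurrences, recorded as vectors $p_j$. Record each signed slack occurrence of $l$ as $\pm e_i$. All these vectors have $\triangle$-norm at most $d$, and their sum is
\[
 v=f_X+\sum_j f_{Y_j}p_j,
\]
which is integral and nonnegative. Its coordinate sum is less than $c_0+d_0d$. Since it is an integer, it is at most $kd$ with $k=d_0+\max\{0,c_0-1\}$; this needs a short check when $c_0\le1$. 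Lemma~\ref{lem:signedmass} splits $-v$ into $k$ auxiliary vectors of norm at most $d$.

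\emph{The no-zero-sum property and the count.} A nonempty zero-sum submultiset of the original vectors would restore to a nonzero $h\in\ker_\ZZ D$ with $h^\pm\le l^\pm$. The last coordinate of $Dh$ vanishes because zero-sum forces as many positive as negative $Y$-occurrences to be used. The previous paragraph excludes such an $h$. Lemma~\ref{lem:Steinitz} orders all vectors with partial sums in $\sigma_mdB_\triangle$, and integrality of the norm places them in $\Lambda B_\triangle$, a set of $Q$ lattice points. Lemma~\ref{lem:cycles} then bounds the original multiset by $k(Q-1)$. The number of pairs equals $\sum_j(l_{Y_j})^-$, and $\sum_j(l_{Y_j})^+=\sum_j(l_{Y_j})^-+d_0$. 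This gives~\eqref{eq:matrix-proximity}.

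\emph{The uniform bound~\eqref{eq:matrix-proximity-norm}.} We have
\[
 \|u_Y-z_Y\|_1\le\|l_Y\|_1+\|f_Y\|_1\le2k(Q-1)+2d_0 .
\]
The vertex has at most $m+1$ positive coordinates, so $c_0\le m+1-t$. With $d_0\le t-1$ this gives $k\le m-1$ whenever $c_0\ge1$, and $k\le t-1\le m$ otherwise. In every case $2k(Q-1)+2d_0\le2mQ$.
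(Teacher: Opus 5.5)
Your outline (pair the $Y$-occurrences, apply Steinitz, count cyclically with $k$ auxiliary vectors, then use $k\le m$) is the paper's. However, two steps fail as written.

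\emph{Choice of $u$.} With $u$ nearest to $z$, the exchange argument for $l=u-\lfloor z\rfloor$ breaks down, as you suspected. On a coordinate with $h_i=l_i=1$ and $f_i>1/2$, replacing $u$ by $u-h$ raises that coordinate's contribution to $\|u-z\|_1$ from $1-f_i$ to $f_i$. You give no reason why the other coordinates compensate. Your fallback does not close this. If $u$ is nearest to $\lfloor z\rfloor$ only among the minima nearest to $z$, you get a contradiction only when $u-h$ is again nearest to $z$, which is exactly the point in question. The repair, which is the paper's choice, is to take $u$ minimizing $\|u-\lfloor z\rfloor\|_1$ over all integral cost minima from the outset. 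Then $h\sqsubseteq l$ gives $\|l-h\|_1=\|l\|_1-\|h\|_1$, and the contradiction is immediate. Nothing later uses the distance to $z$ itself, since the final estimate passes through $\|l_Y\|_1+\|f_Y\|_1$.

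\emph{The multiset.} You put the $d_0$ unpaired vectors $p_j$ into the original multiset, and then the no-zero-sum hypothesis of Lemma~\ref{lem:cycles} is unjustified. Your reason, that a zero sum forces equally many positive and negative $Y$-occurrences, is false: a zero sum controls only the first $m$ rows of $D$, not the last one. Take an unpaired $p_j$ together with negative slack units summing to $-p_j$, or simply an unpaired $p_j=0$. This is a zero-sum submultiset, but it restores to a vector $h$ with $Dh=(0,\text{number of unpaired vectors used})\ne0$. So $h$ is not a kernel vector, and the minimality of $l$ excludes nothing. Balance of $Y$-occurrences is automatic only for submultisets of paired differences and slack units, so these alone must form the original multiset. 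Its sum is then
$S_0=\sum_jf_{Y_j}p_j+f_X-\sum_{\text{unpaired}}p_j$, which has both signs. You need two bounds:
\begin{itemize}
\item $\sum_i(S_0)_i^-\le d_0d$, coming from the unpaired part;
\item $\sum_i(S_0)_i^+\le d_0d+\max\{0,c_0-1\}$, by integrality together with $\sum_i(f_X)_i<c_0$.
\end{itemize}
After that, Lemma~\ref{lem:signedmass}, which is designed for mixed signs, supplies the $k$ auxiliary vectors cancelling $S_0$. Moving the unpaired vectors to the auxiliary side instead would only give the weaker bound $(k+d_0)(Q-1)$.

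With these two changes, the rest of your argument goes through as written, including $k\le m$ and $2k(Q-1)+2d_0\le2mQ$.
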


\begin{proof}
Among the integral cost minima choose $u$ minimizing
$\|u-\lfloor z\rfloor\|_1$. Set
\[
 l=u-\lfloor z\rfloor,\quad f=z-\lfloor z\rfloor,\quad
 P=\sum_j(l_{Y_j})^+,\quad N=\sum_j(l_{Y_j})^-.
\]
There is no nonzero integer kernel vector $h\sqsubseteq l$.
Indeed,
\[
 h^+\le l^+\le u,\qquad
 h^-\le l^-\le\lfloor z\rfloor\le z,
\]
so $u-h$ and $z+h$ are respectively integral and real feasible
points. Their optimalities imply $\cost(h)\le0$ and
$\cost(h)\ge0$. Thus $\cost(h)=0$, but replacing $u$ by $u-h$
strictly decreases its distance to $\lfloor z\rfloor$.

The last row of $Dl=Df$ gives
\begin{equation}\label{eq:degree-residual}
 P-N=d_0=\sum_j f_{Y_j},\qquad 0\le d_0\le t-1.
\end{equation}
The sum on the right is an integer with $t$ terms in $[0,1)$.
If $d_0=0$, all $Y$-coordinates of $z$ are integral, and $Dz=q$
forces its slack coordinates to be integral. The choice of $u$
then gives $u=z$.

Assume $d_0>0$. Pair the $N$ negative $Y$-occurrences in $l$
with positive ones, leaving $d_0$ unpaired positive occurrences.
Take the paired differences and all signed slack occurrences as
the original multiset for Lemma~\ref{lem:cycles}. Every vector
has $\triangle$-norm at most $d$. A nonempty zero-sum submultiset
would restore to a nonzero integer kernel vector conformal to $l$;
the disjoint positive and negative coordinate supports prevent
cancellation. Such a submultiset is therefore excluded.

Let $S_0$ be the sum of the original multiset. The projected rows
of $Dl=Df$ give
\begin{equation}\label{eq:S0}
 S_0=\sum_jf_{Y_j}p_j
       -\sum_{\text{unpaired occurrences }j}p_j+f_X
     =U-V+f_X.
\end{equation}
Here $U,V\ge0$ and both coordinate sums are at most $d_0d$.
The negative coordinate sum of $S_0$ is at most $d_0d$.
If $c_0=0$, then $f_X=0$ and its positive coordinate sum is
also at most $d_0d$. If $c_0\ge1$, then
$\sum_i(f_X)_i<c_0$, so
\[
 \sum_i(S_0)_i^+<d_0d+c_0.
\]
The left-hand side is an integer. Hence
\begin{equation}\label{eq:triangle-residual}
 \sum_i(S_0)_i^-\le d_0d\le kd,\qquad
 \sum_i(S_0)_i^+\le d_0d+\max\{0,c_0-1\}\le kd.
\end{equation}

Lemma~\ref{lem:signedmass} writes $-S_0$ as a sum of $k$ integer
auxiliary vectors of $\triangle$-norm at most $d$.
Apply Lemma~\ref{lem:Steinitz} to the combined multiset.
All partial sums lie in the same set of $Q$ lattice points in
$\Lambda B_\triangle$. Lemma~\ref{lem:cycles} gives
$N\le k(Q-1)$, and $P=N+d_0$ proves
\eqref{eq:matrix-proximity}.

Finally, $t+c_0\le m+1$, because $z$ is a vertex and every
fractional slack coordinate is positive. Together with
$d_0\le t-1$, this gives $k\le m$. Since $d_0\le k$,
\[
 \|u_Y-z_Y\|_1\le\|l_Y\|_1+\|f_Y\|_1
 =P+N+d_0\le2k(Q-1)+2d_0\le2mQ.
\]
This proves~\eqref{eq:matrix-proximity-norm}.
\end{proof}

\section{Socle modules and stability of associated primes}\label{sec:algebra}

Return to a nonzero proper monomial ideal
$I\subset R=\kk[x_1,\ldots,x_r]$, where $r\ge2$.
Put $d=d(I)$ and $s=\mu(I)$ as in the introduction.
All associated primes are taken
over the indicated ring, and depth is computed at the homogeneous
maximal ideal. Associated primes of monomial ideals are generated
by variables and admit monomial colon witnesses; see
\cite[Chapter~1]{HH}.

For a nonempty subset $E\subseteq[r]$, set
\[
 R_E=\kk[x_i:i\in E],\qquad
 P_E=(x_i:i\in E)R,\qquad
 \mathfrak m_E=(x_i:i\in E)R_E.
\]
Let $I_E\subseteq R_E$ be obtained by setting $x_j=1$ for $j\notin E$.

\begin{lemma}\label{lem:local}
For every $n\ge1$,
\[
 P_E\in\Ass_R(R/I^n)
 \quad\Longleftrightarrow\quad
 \mathfrak m_E\in\Ass_{R_E}(R_E/I_E^n).
\]
If $I_E\ne R_E$, then $\mu(I_E)\le\mu(I)$ and $d(I_E)\le d(I)$.
\end{lemma}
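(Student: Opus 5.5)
The argument is standard monomial localization at $P_E$, followed by elementary bookkeeping on minimal generators. Write $\pi:R\to R_E$ for the $\kk$-algebra map sending $x_j\mapsto1$ for $j\notin E$ and $x_i\mapsto x_i$ for $i\in E$. Then $I_E=\pi(I)R_E$, and since $\pi$ is multiplicative, $(I^n)_E=(I_E)^n$. The key identification is the localization
\[
 R_{P_E}\supseteq R[x_j^{-1}:j\notin E].
\]
Inverting the variables outside $E$ gives $R[x_j^{-1}:j\notin E]\cong R_E[x_j^{\pm1}:j\notin E]$. Under this isomorphism, $I^nR[x_j^{-1}]$ corresponds to the extension of $I_E^n$, because each monomial generator differs from its image under $\pi$ by a unit monomial.

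First I show $P_E\in\Ass_R(R/I^n)$ if and only if $P_E\in\Ass$ after inverting $x_j$ for $j\notin E$. This holds because associated primes are preserved by localization at a multiplicative set disjoint from the prime, and $P_E$ contains no $x_j$ with $j\notin E$. Next, $R_E[x_j^{\pm1}]$ is a flat (Laurent polynomial) extension of $R_E$ whose fibers are domains. For a module $M=R_E/I_E^n$, the associated primes of $M\otimes R_E[x_j^{\pm1}]$ are exactly the extensions $\mathfrak pR_E[x_j^{\pm1}]$ with $\mathfrak p\in\Ass(M)$, since these extensions are prime.

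To avoid citing a flatness theorem, I can instead argue directly with monomial witnesses, using \cite[Chapter~1]{HH}. Suppose $P_E=(I^n:f)$ for a monomial $f$. Then $\mathfrak m_E=(I_E^n:\pi(f))$. For $i\in E$, the relation $x_if\in I^n$ gives $x_i\pi(f)\in I_E^n$. Conversely, if $\pi(f)\in I_E^n$, then $f\cdot x^{a}\in I^n$ for some monomial $x^a$ in the variables outside $E$. That monomial lies outside $P_E$, so $f\in I^n:x^a$, and since $I^n:f=P_E$ is prime and $x^a\notin P_E$ we get $f\in I^n$, a contradiction. For the reverse direction, take $g\in R_E$ with $\mathfrak m_E=(I_E^n:g)$. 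I lift $g$ to $f=g\,x^a$ with $a$ large in the outside variables, chosen so that every $R$-generator of $I^n$ dividing some $x_ig\,x^{\infty}$ divides $x_ig\,x^a$; a finite choice works. The colon $I^n:f$ is then a monomial ideal containing $P_E$ and no monomial in $R_E$. I then enlarge $f$ further by a monomial in the outside variables so that the colon becomes exactly $P_E$. This last step is the main obstacle: the colon can contain monomials involving outside variables. The fix is to use that $I^n:x^{\infty}$ (saturation with respect to the outside variables) stabilizes, so for large enough $a$ no outside monomial can enter the colon. Equivalently, I pass to the localization, where the statement is clean, which is why I prefer the localization argument above.

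Finally, the generator bounds. Every $u\in G(I)$ maps to $\pi(u)$, and these images generate $I_E$, so $\mu(I_E)\le\mu(I)$. Moreover $\deg\pi(u)\le\deg u\le d$. A minimal generating set of $I_E$ can be extracted from $\{\pi(u)\}$, which gives $d(I_E)\le d(I)$ whenever $I_E\ne R_E$. If $I_E=R_E$, some $\pi(u)=1$, so the quantities are undefined, which is why that case is excluded.
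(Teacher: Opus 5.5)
Your localization argument is correct and is essentially the paper's proof: invert the variables outside $E$, identify $S^{-1}(R/I^n)$ with $(R_E/I_E^n)[x_j^{\pm1}:j\notin E]$, use that associated primes commute with this localization and extend injectively under the Laurent extension, and get the generator bounds from $G(I_E)\subseteq\pi(G(I))$. The obstacle you flag in the abandoned monomial-witness route is not real: with $f=gx^a$, any monomial of $I^n:f$ not in $P_E$ involves only variables outside $E$, and applying $\pi$ would put $g$ in $I_E^n$, so the colon is already exactly $P_E$.
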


\begin{proof}
Let $S$ be the multiplicative set generated by the variables
$x_j$ with $j\notin E$. Since specialization commutes with powers,
\[
 S^{-1}(R/I^n)\simeq
 (R_E/I_E^n)[x_j^{\pm1}:j\notin E].
\]
Associated primes commute with localization and extend under
Laurent polynomial extension. The extension of $\mathfrak m_E$
is $S^{-1}P_E$, whose contraction is $P_E$. This proves the
equivalence. The minimal generators of $I_E$ form a subset of the
projections of $G(I)$, which proves the remaining assertions.
\end{proof}

We shall also use the following fact; see~\cite[Remark, p.~47]{Trung}.

\begin{lemma}\label{lem:two}
Every nonzero proper monomial ideal in two variables has index of
stability one.
\end{lemma}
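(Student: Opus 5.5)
The plan is to show directly that $\Ass_R(R/I^n)=\Ass_R(R/I)$ for every $n\ge1$. The tool is a factorization of $I$ into a monomial times an ideal that is either the unit ideal or primary to the maximal ideal. Write $R=\kk[x,y]$ and $\mm=(x,y)$. Every associated prime of a monomial ideal is generated by variables, so the only candidates are $(x)$, $(y)$ and $\mm$.

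First, let $x^ay^b$ be the greatest common divisor of the minimal monomial generators of $I$, and write $I=x^ay^bJ$. The monomial ideal $J$ has generators with no common variable factor. Hence some minimal generator of $J$ is not divisible by $x$, i.e.\ it is a pure power of $y$. Likewise some generator is a pure power of $x$. So either $J=R$ or $J$ is $\mm$-primary. Moreover
\[
I^n=x^{na}y^{nb}J^n ,
\]
and $J^n$ has the same dichotomy: $J^n=R$ exactly when $J=R$, and otherwise $J^n$ is $\mm$-primary.

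Next, for any monomial ideal of the form $K=fL$, with $f$ a monomial and $L$ either $R$ or $\mm$-primary, use the short exact sequence
\[
0\to fR/fL\to R/fL\to R/fR\to0 .
\]
Multiplication by $f$ is injective on $R$, so $fR/fL\cong R/L$. Hence
\[
\Ass(R/L)\subseteq\Ass(R/K)\subseteq\Ass(R/L)\cup\Ass(R/fR).
\]
We have $\Ass(R/fR)\subseteq\{(x),(y)\}$, and $\Ass(R/L)$ is $\{\mm\}$ when $L\ne R$ and empty when $L=R$. Therefore
\[
\mm\in\Ass(R/K)\iff L\ne R .
\]
Applied to $K=I^n$, this shows that $\mm\in\Ass(R/I^n)$ if and only if $J\ne R$, a condition independent of $n$.

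Finally, the height-one primes $(x)$ and $(y)$ are associated to $R/I^n$ exactly when they are minimal over $I^n$. Since $\sqrt{I^n}=\sqrt I$, the minimal primes do not depend on $n$. Concretely, $(x)$ occurs exactly when $a>0$, and $(y)$ exactly when $b>0$. Combining the two cases gives $\Ass_R(R/I^n)=\Ass_R(R/I)$ for all $n\ge1$, so $\astab(I)=1$.

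The only step needing care is the first, namely that stripping the monomial gcd leaves an ideal that is the unit ideal or $\mm$-primary. This is specific to two variables, and it is where the argument would fail for $r\ge3$.
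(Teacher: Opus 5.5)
Your proof is correct. The paper does not prove this lemma itself; it only cites \cite[Remark, p.~47]{Trung}. Your argument is therefore a self-contained alternative rather than a reconstruction of something in the text.

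You factor out the monomial gcd $x^ay^b$, writing $I=x^ay^bJ$ with $J$ equal to $R$ or $\mm$-primary; this is the step that genuinely uses two variables. You then apply the exact sequence $0\to fR/fL\to R/fL\to R/fR\to0$. This gives more than $\astab(I)=1$: for every $n\ge1$, $\Ass_R(R/I^n)$ consists of $(x)$ if $a>0$, $(y)$ if $b>0$, and $\mm$ if $J\ne R$, independently of $n$.

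The citation keeps the paper short. Your route makes this base case independent of outside references. Its $J=R$ case is the two-variable instance of the observation about principal ideals that the paper states immediately after the lemma.

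The only unstated point is trivial. The zero ideal, generated by the empty set of variables, cannot be associated, since the nonzero ideal $I^n$ annihilates $R/I^n$.
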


A nonzero principal monomial ideal also has index of stability one:
the associated primes of each positive power are precisely the
height-one monomial primes corresponding to variables dividing
its generator. The same observation covers the one-variable case.
Henceforth, in the technical arguments, assume $r\ge3$ and $s\ge2$.

\subsection{The Rees quotient and optimal integer solutions}

Write
\[
 G(I)=\{x^{a_1},\ldots,x^{a_s}\},\qquad
 a_j=(a_{1j},\ldots,a_{rj})\in\NN^r.
\]
Put $m=r-1$, $a'_j=(a_{2j},\ldots,a_{rj})$, and
$p_j=a'_j$ in the matrix~\eqref{eq:D}. Label its slack columns
$X_2,\ldots,X_r$, and use the linear cost
\[
 \cost(\kappa,\beta)=\sum_j a_{1j}\beta_j.
\]
Thus $D$ records the projected exponent and the power of $I$,
whereas the objective records the exponent of $x_1$.

Set
\[
 \Rees(I)=R[I\tau]=\bigoplus_{n\ge0}I^n\tau^n,\qquad
 C=\Rees(I)/x_1\Rees(I).
\]
Use the presentation
\[
 T=\kk[X_1,\ldots,X_r,Y_1,\ldots,Y_s]\longrightarrow\Rees(I),
 \qquad X_i\longmapsto x_i,\quad Y_j\longmapsto x^{a_j}\tau.
\]
The Rees grading is $\deg X_i=0$, $\deg Y_j=1$. We also use the
multigrading by the full exponent and the Rees degree. Every
multigraded component of $C$ has dimension at most one. Write
$X'=(X_2,\ldots,X_r)$, and let $L\subseteq T$ be the monomial
ideal generated by all monomials whose images in $C$ are zero.
The kernel of $T\to C$ may also contain binomials; we use $L$
only to test whether an individual monomial class vanishes.
\begin{lemma}[Integral optima and nonzero classes]\label{lem:survive}
Let $u=(\kappa,\beta)$. The class of $X'^\kappa Y^\beta$ in $C$
is nonzero if and only if $u$ minimizes cost over the integral
fiber $F_{Du}$. All integral optima in this fiber represent the
same nonzero multihomogeneous class.
\end{lemma}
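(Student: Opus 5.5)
The plan is to identify the monomial $X'^\kappa Y^\beta$ with its image in $\Rees(I)$ and compare exponents. Under the presentation, $X'^\kappa Y^\beta$ maps to $x_2^{\kappa_2}\cdots x_r^{\kappa_r}\prod_j x^{a_j\beta_j}\tau^n$, where $n=\yd u$. Its exponent vector in $x_2,\ldots,x_r$ is $b=\kappa+\sum_j\beta_ja'_j$, which is the first block of $Du$. Its $x_1$-exponent is $\cost(u)=\sum_ja_{1j}\beta_j$. So $Du=(b,n)$ records the $x'$-exponent and the Rees degree, and the cost records the $x_1$-exponent. The image is the monomial $x_1^{\cost(u)}x'^b\tau^n$, which lies in $I^n\tau^n$. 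Hence the fiber $F_{Du}$ parametrizes exactly the monomials of $T$ with fixed $(b,n)$, and their images are the elements $x_1^{c}x'^b\tau^n$ with $c$ ranging over the set $\{\cost(v):v\in F_{Du}\}$.

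Next we describe $x_1\Rees(I)$ in degree $n$. It is $x_1I^n\tau^n$, a monomial subspace. So the class of $x_1^cx'^b\tau^n$ in $C$ is zero exactly when $x_1^{c-1}x'^b\in I^n$, with $c\ge1$. A monomial $x_1^{e}x'^b$ lies in $I^n$ precisely when it is divisible by a product of $n$ generators. Such a divisibility means that there exist $\beta'\in\NN^s$ with $\sum\beta'_j=n$, $\sum_j\beta'_ja'_j\le b$, and $\sum_j\beta'_ja_{1j}\le e$. Taking slack $\kappa'=b-\sum\beta'_ja'_j\ge0$, this says exactly that some $v\in F_{Du}$ has $\cost(v)\le e$. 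Therefore the class of $X'^\kappa Y^\beta$ vanishes if and only if some $v\in F_{Du}$ has $\cost(v)\le\cost(u)-1$. Because costs are integers, this happens if and only if $u$ is not a cost minimum over $F_{Du}$. That proves the first assertion.

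For the second assertion, every $v$ in the fiber maps to $x_1^{\cost(v)}x'^b\tau^n$. All integral optima share the same cost $c_{\min}$, so they have the same image $x_1^{c_{\min}}x'^b\tau^n$ in $\Rees(I)$, hence the same class in $C$. That class is nonzero by the first part and is multihomogeneous of degree $(c_{\min},b,n)$.

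The step needing most care is the identification of the membership $x_1^{c-1}x'^b\in I^n$ with a fiber point of smaller cost. Here we must use that $I^n$ is generated by the products $x^{\sum\beta'_ja_j}$ with $|\beta'|=n$. The $x_1$-slack is absorbed by the inequality on the cost, while the $x'$-slack is absorbed by the columns $X_2,\ldots,X_r$. The lemma deliberately omits a slack column for $x_1$, and this is why the condition becomes a cost inequality rather than a fiber equation.
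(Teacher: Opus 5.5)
Your proof is correct and follows essentially the same route as the paper. You identify the image of $X'^\kappa Y^\beta$ as $x_1^{\cost(u)}x'^b\tau^n$, and you show that membership in $x_1I^n\tau^n$ is equivalent to the existence of a fiber point of cost at most $\cost(u)-1$; the paper does the same by multiplying by a power of $x_1$ in one direction and dropping $X_1$ factors in the other, and both arguments get the second assertion from the fact that all optima share the cost and the projected multidegree.
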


\begin{proof}
The projected fiber fixes the projected exponents and the Rees degree
$n$; the exponent in the distinguished coordinate is $\cost(\kappa,\beta)$.
If a smaller integral cost exists, its difference from this cost is
at least one. Multiplying the corresponding Rees monomial by a
suitable nonnegative power of $x_1$ represents the original
monomial divided by $x_1$. Thus the original class vanishes in $C$.
Conversely, a representation after division by $x_1$, with any
$X_1$ factors omitted, gives a point of the same projected fiber
with smaller cost.
Integral minima have both the same cost and the same projected
exponents, hence the same full Rees exponent.
\end{proof}

\subsection{The socle module and its generating degrees}

Let
\[
 A=\kk[Y_1,\ldots,Y_s],\qquad A_+=(Y_1,\ldots,Y_s),\qquad
 M=0:_C\mathfrak m,
\]
where $\mathfrak m=(x_1,\ldots,x_r)$. The action of $A$ on $C$
is induced by the presentation above. The module $M$ is the
$\mathfrak m$-socle of $C$.
\begin{lemma}\label{lem:M}
For every $n\ge0$, the map
\[
 (I^n:\mm)/I^n\longrightarrow M_n,\qquad
 u+I^n\longmapsto[x_1u\tau^n],
\]
is an isomorphism of $\kk$-vector spaces.
It preserves the Rees degree $n$.
For $n\ge1$, therefore,
\begin{equation}\label{eq:socle-detection}
 M_n\ne0
 \quad\Longleftrightarrow\quad
 \mm\in\Ass_R(R/I^n).
\end{equation}
\end{lemma}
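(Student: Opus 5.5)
We would prove Lemma~\ref{lem:M} in three steps: show the map is well defined, show it lands in $M_n$, and show it is bijective, working one multidegree at a time. Since $C=\Rees(I)/x_1\Rees(I)$, the degree-$n$ component is
\[
 C_n=I^n/x_1I^n.
\]
Indeed, $(x_1\Rees(I))_n=x_1I^n\tau^n$. Multiplication by $x_1$ takes $I^n:\mm$ into $I^n$, because $x_1\in\mm$. It also takes $I^n$ into $x_1I^n$. Hence the map $u+I^n\mapsto[x_1u\tau^n]$ is a well-defined $\kk$-linear map $(I^n:\mm)/I^n\to C_n$, and it preserves the Rees degree $n$.

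For the image, let $u\in I^n:\mm$ and $i\in[r]$. Then $x_iu\in I^n$, so
\[
 x_i\cdot x_1u\tau^n=x_1(x_iu)\tau^n\in x_1I^n\tau^n .
\]
Thus the class of $x_1u\tau^n$ is annihilated by every $x_i$, and the image lies in $M_n$.

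For injectivity, suppose $x_1u\in x_1I^n$. Since $x_1$ is a nonzerodivisor in $R$, this gives $u\in I^n$.

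For surjectivity, take a class $[v\tau^n]\in M_n$ with $v\in I^n$. The conditions are monomial and $C$ is multigraded, so we may assume $v$ is a monomial; each multigraded piece of $C$ has dimension at most one. The socle condition then says $x_iv\in x_1I^n$ for every $i$.
\begin{itemize}
\item Case $x_1\nmid v$. Then $x_2v\in x_1I^n$ is impossible for $r\ge2$: a monomial in $x_1I^n$ is divisible by $x_1$, and $x_2v$ is not. So $v\in x_1I^n$ only in the trivial situation, and otherwise the class cannot be a nonzero socle element. Concretely, $x_2v\in x_1I^n$ would force $x_1\mid v$, so a nonzero socle class always has $x_1\mid v$.
\item Case $x_1\mid v$. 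Write $v=x_1u$. The condition $x_ix_1u\in x_1I^n$ gives $x_iu\in I^n$ for every $i$, that is, $u\in I^n:\mm$.
\end{itemize}
Hence $[v\tau^n]$ is the image of $u+I^n$. Since $M$ is a multigraded submodule, it is spanned by such monomial classes, and the map is surjective.

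The main delicate point is this surjectivity reduction. We must justify that $M_n$ is spanned by multihomogeneous monomial classes; this holds because $x_1\Rees(I)$ and $\mm$ are monomial, so $M$ is multigraded. We must also keep track of monomials $v\in x_1I^n$, which give zero classes and may be ignored.

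Finally, for $n\ge1$ the module $(I^n:\mm)/I^n$ is nonzero exactly when $\mm\in\Ass_R(R/I^n)$. It is the socle of $R/I^n$, and $\mm$ is associated precisely when some nonzero element of $R/I^n$ is killed by $\mm$; monomial witnesses exist by \cite[Chapter~1]{HH}. This yields~\eqref{eq:socle-detection}.
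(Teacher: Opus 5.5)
Your proof is correct and follows essentially the same route as the paper. The key step is that $x_2v\in x_1I^n$ forces $x_1\mid v$; writing $v=x_1u$ then turns the socle conditions into $u\in I^n:\mm$, and the kernel is $I^n$ because $x_1$ is a nonzerodivisor.

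Your reduction to monomial $v$ is valid but unnecessary. The paper uses $(x_1):x_2=(x_1)$, which gives $x_1\mid v$ for an arbitrary polynomial $v$. This also lets you drop the somewhat muddled first bullet of your case analysis.
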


\begin{proof}
A class $[v\tau^n]\in C_n$ belongs to $M$ precisely when
$v\in I^n$ and $x_iv\in x_1I^n$ for every $i\ge2$.
The condition for $i=2$ implies $x_2v\in(x_1)$.
Since $(x_1):x_2=(x_1)$, it follows that $x_1\mid v$;
write $v=x_1u$.
These conditions are then exactly $x_iu\in I^n$ for
$1\le i\le r$.
The class vanishes if and only if $u\in I^n$, proving the
isomorphism. The final assertion is the usual socle criterion
for an associated maximal ideal.
\end{proof}

\begin{proposition}\label{prop:common}
Suppose that $L$ is generated in $Y$-degrees at most an integer
$b\ge1$.
\begin{enumerate}
\item The graded $A$-module $M$ is finitely generated in degrees at most
\[
 g(b)=\min\{(r-1)b,sb-1\}.
\]
\item If a nonzero multihomogeneous class of $C$ is annihilated by
every $Y_j$, then every monomial representation
$X'^\kappa Y^\beta$ of that class satisfies
\[
 \beta_j\le b-1\qquad(1\le j\le s).
\]
\end{enumerate}
\end{proposition}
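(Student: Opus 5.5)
The plan is to work throughout with monomial representatives. By Lemma~\ref{lem:survive} and the fact that every multigraded component of $C$ has dimension at most one, a monomial $X'^\kappa Y^\beta$ has zero class in $C$ exactly when it lies in $L$. Moreover $M$ is a multigraded submodule of $C$, spanned by nonzero monomial classes. Since $X_1$ acts as zero on $C$, a nonzero class $[w]$ lies in $M$ if and only if $X_iw\in L$ for $2\le i\le r$. I will also record that $L$ has no generators of $Y$-degree $0$. Indeed, in Rees degree $0$ the fiber $F_{(b,0)}$ is a single point, so every $X'$-monomial survives. Hence every generator of $L$ has $Y$-degree between $1$ and $b$.

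\emph{Part (2).} Let $[w]\ne0$ with $w=X'^\kappa Y^\beta$ and $Y_j[w]=0$. Then $Y_jw\in L$, so some minimal generator $g=X'^{\kappa'}Y^{\beta'}$ of $L$ divides $Y_jw$, and $\yd g\le b$. Since $w\notin L$, $g$ does not divide $w$. Divisibility of $Y_jw$ fails only in the $Y_j$-exponent, so $\beta'_j=\beta_j+1$. Thus $\beta_j+1\le\yd g\le b$. This argument applies to every monomial representation of the class, which is exactly what the statement requires.

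\emph{Part (1).} Take a nonzero monomial class $[w]\in M_n$, $w=X'^\kappa Y^\beta$ with $\sum_j\beta_j=n$. For each $i=2,\dots,r$ choose a generator $g_i$ of $L$ of $Y$-degree at most $b$ dividing $X_iw$. Set $\mu_j=\max_i(g_i)_{Y_j}$. Suppose some $j$ has $\beta_j>\mu_j$. Then $w/Y_j$ is still divisible, after multiplication by $X_i$, by each $g_i$, so $X_i(w/Y_j)\in L$ for every $i\ge2$. Also $w/Y_j\notin L$, since otherwise $w\in L$. Hence $[w/Y_j]$ is a nonzero element of $M_{n-1}$ and $[w]=Y_j[w/Y_j]$. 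So $[w]$ is not needed as a generator whenever such a $j$ exists. Such a $j$ exists as soon as $n>\sum_j\mu_j$. Since $\sum_j\mu_j\le\sum_i\yd g_i\le(r-1)b$, the generating degrees are at most $(r-1)b$. Finite generation then follows, because in each of the finitely many degrees up to the bound the module $M_n\subseteq C_n$ is finite-dimensional: the fibers are finite, and only finitely many projected exponents occur with $x_iu\in I^n$ for all $i$.

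\emph{The bound $sb-1$.} The crude estimate gives only $\sum_j\mu_j\le sb$. The remaining case is $n=sb$ with $\beta_j=\mu_j=b$ for every $j$. Then for each $j$ some $g_i$ equals $X'^{\kappa_i}Y_j^{b}$ and divides $X_iw$, and $w$ has $Y$-part $Y_1^b\cdots Y_s^b$. This is the step I expect to be the main obstacle. My first attempt is to derive a contradiction with $w\notin L$ by combining two of these pure-power generators. Because $s\ge2$, $w$ also contains the factor $Y_{j'}^b$ for some $j'\ne j$. I would try to show that $X'^{\kappa_i}Y_j^b$ dividing $X_iw$ forces a cheaper integral point in the fiber of $w$, by trading a $Y_{j'}$-occurrence of $w$ for the slack freed by $X_i$. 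Failing that, I would use the freedom to choose a different integral optimum representing $[w]$; all optima give the same class by Lemma~\ref{lem:survive}. A representation whose $Y$-part is not of the extremal shape $\beta_j=b$ for all $j$ would then yield some $j$ with $\beta_j>\mu_j$ for suitably rechosen $g_i$. Either route would give generation in degrees at most $sb-1$, and together with the previous bound this gives $g(b)$.
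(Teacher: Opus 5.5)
Your part~(2) is the paper's argument. Your reduction $[w]=Y_j[w/Y_j]$ whenever $\beta_j>\mu_j$ correctly gives generation in degrees at most $(r-1)b$. It is an unpacked form of the paper's ideal $J_0=\bigcap_{i\ge2}(L:X_i)$, whose minimal generators are least common multiples of one generator from each $L:X_i$. There are two small slips. First, $X_1$ is a generator of $L$ of $Y$-degree $0$; this is harmless, since $g_i\mid X_iw$ excludes it. Second, finite-dimensionality of $M_n$ should come from $M_n\cong(I^n:\mathfrak m)/I^n$ in Lemma~\ref{lem:M}, because the condition ``$x_iu\in I^n$ for all $i$'' is satisfied by every $u\in I^n$.

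The genuine gap is the bound $sb-1$. You correctly reduce to the extremal case $\beta_j=\mu_j=b$ for all $j$. There every index $j$ requires an $X_1$-free generator of $L$ whose $Y$-part is exactly $Y_j^b$. However, neither of your two routes is carried out. The first also aims at the wrong contradiction, namely with $w\notin L$. Pure-power generators do occur in $L$: for $G(I)=\{x_1,x_2\}$ one has $X_2Y_1\in L$. So the contradiction has to come from a specific index.

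The missing idea is to fix $j_0$ with $a_{1j_0}=\min_j a_{1j}$. Every integral point of Rees degree $a$ has cost $\sum_j a_{1j}\beta_j\ge a\,a_{1j_0}$. By Lemma~\ref{lem:survive}, no monomial $X'^{\alpha}Y_{j_0}^{a}$ therefore lies in $L$. Consequently every $X_1$-free generator of $L$ with positive $Y_{j_0}$-exponent contains a second $Y$-variable, so its $Y_{j_0}$-exponent is at most $b-1$. In your notation this gives $\mu_{j_0}\le b-1$, hence $\sum_j\mu_j\le sb-1$, and the extremal case cannot occur.

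The paper argues the same way for a minimal generator $v=X'^\kappa Y^\beta$ of $M$. For each $j$ with $\beta_j>0$ it finds $i$ with $X_iv\in L$ and $X_iv/Y_j\notin L$, hence a generator of $L$ with $Y_j$-exponent exactly $\beta_j$. For $j=j_0$ that generator must involve another $Y$-variable, which gives $\beta_{j_0}\le b-1$ and total degree at most $sb-1$.
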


\begin{proof}
For $i\ge2$, put $J_i=L:X_i$ and
$J_0=\bigcap_{i=2}^rJ_i$. The monomial-colon formula gives
generators $v/\gcd(v,X_i)$ for $J_i$, where $v$ ranges over the
minimal monomial generators of $L$. Their $Y$-degrees are at most
$b$. The intersection $J_0$ is generated by least common multiples
of one such generator from each $J_i$, so its minimal generators
have $Y$-degrees at most $(r-1)b$.

The module $M$ is multigraded. Since each multigraded component of
$C$ has dimension at most one, $M$ is spanned by monomial classes.
A nonzero monomial class $[u]$ belongs to $M$ precisely when
$u\in J_0\setminus L$, since $X_1$ already annihilates $C$.
Choose a minimal generator $v$ of $J_0$ dividing $u$. Then
$v\notin L$ and $[v]\in M$. The quotient $u/v$ contains no
$X_i$: otherwise $[u]=0$, since every $X_i$ annihilates $[v]$.
Thus $u/v\in A$, and the nonzero classes of the finitely many
generators of $J_0$ generate $M$ over $A$.

For the second generating-degree estimate, choose a minimal
multihomogeneous generating set of $M$ and represent one of its
elements by $v=X'^\kappa Y^\beta$. If $\beta_j>0$, then
$v/Y_j\notin L$ and its class does not belong to $M$, by
minimality. Hence some $i\ge2$ satisfies
\[
 X_iv\in L,\qquad X_iv/Y_j\notin L.
\]
A minimal generator of $L$ dividing $X_iv$ but not $X_iv/Y_j$
has $Y_j$-exponent $\beta_j$. Consequently $\beta_j\le b$.
Choose $j_0$ minimizing $a_{1j}$. No monomial
$X'^\alpha Y_{j_0}^a$ belongs to $L$, since its omitted-coordinate
exponent $a\,a_{1j_0}$ is the smallest possible cost among all
representations of Rees degree $a$. If $\beta_{j_0}>0$, the
minimal generator of $L$ just found must therefore involve another
$Y$-variable, and $\beta_{j_0}\le b-1$. This also holds when
$\beta_{j_0}=0$. Summing gives $\deg v\le sb-1$.

Finally, let $u=X'^\kappa Y^\beta\notin L$ represent a class
annihilated by every $Y_j$. A minimal generator of $L$ dividing
$uY_j$ but not $u$ has $Y_j$-exponent $\beta_j+1$. Its total
$Y$-degree is at most $b$, so $\beta_j\le b-1$.
\end{proof}

We record the elementary graded-module fact that will turn these
degree estimates into stabilization bounds.

\begin{lemma}\label{lem:graded-stability}
Let $N$ be a finitely generated nonnegatively graded module over
$A$, generated in degrees at most $g$.
\begin{enumerate}
\item If $N$ is infinite-dimensional over $\kk$, then $N_n\ne0$
for every $n\ge g$.
\item If $N$ is finite-dimensional and nonzero, its last nonzero
component is annihilated by $A_+$.
\end{enumerate}
Thus, if in the finite-dimensional case the degrees of
$0:_N A_+$ are at most $h$, then the condition $N_n\ne0$ is
independent of $n$ for $n\ge\max\{1,g,h+1\}$.
\end{lemma}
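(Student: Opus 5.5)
We will prove both parts of Lemma~\ref{lem:graded-stability} directly from the standard graded structure of $A=\kk[Y_1,\ldots,Y_s]$, with all $Y_j$ in degree one, and then combine them for the final assertion.

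For part (1), choose homogeneous generators $w_1,\ldots,w_\ell$ of $N$ with $\deg w_i\le g$. Then $N_n=\sum_i A_{n-\deg w_i}w_i$ for every $n$. First suppose $N_n=0$ for some $n\ge g$. Every monomial of degree $n+1$ in $A$ is $Y_j$ times a monomial of degree $n$, so
\[
N_{n+1}=\sum_i A_{n+1-\deg w_i}w_i=\sum_j Y_j\sum_i A_{n-\deg w_i}w_i=\sum_j Y_jN_n=0.
\]
This step uses $n+1-\deg w_i\ge1$, which holds because $n\ge g$. By induction $N_{n'}=0$ for all $n'\ge n$. Each $N_{n'}$ is finite-dimensional, since $A_{k}$ is finite-dimensional and there are finitely many generators. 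Hence $N=\bigoplus_{n'<n}N_{n'}$ would be finite-dimensional, a contradiction. So $N_n\ne0$ for all $n\ge g$.

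For part (2), let $n_0$ be the largest degree with $N_{n_0}\ne0$. Since $Y_jN_{n_0}\subseteq N_{n_0+1}=0$, the component $N_{n_0}$ lies in $0:_NA_+$.

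For the final assertion, split into cases.

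In the infinite-dimensional case, part (1) gives $N_n\ne0$ for all $n\ge g$, so the condition is constant for $n\ge\max\{1,g,h+1\}$.

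In the finite-dimensional case, first suppose $N\ne0$. By part (2) the last nonzero degree $n_0$ is a degree of $0:_NA_+$, so $n_0\le h$. Hence $N_n=0$ for every $n\ge h+1$. If $N=0$, then $N_n=0$ for all $n$ trivially.

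In each case, $N_n\ne0$ is independent of $n$ for $n\ge\max\{1,g,h+1\}$.

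No real obstacle arises. The only care needed is the index condition $n\ge g$ in part (1), which is what makes every generator contribute to degree $n+1$ through a degree-$n$ element. The hypothesis that $N$ is nonnegatively graded ensures the components below $n$ are finitely many, so the finite-dimensionality contradiction goes through.
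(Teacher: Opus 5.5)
Your proof is correct and follows the paper's argument: for $n\ge g$ the identity $N_{n+1}=A_1N_n$ makes a vanishing component propagate upward, which contradicts infinite-dimensionality, and the last nonzero component is annihilated by each $Y_j$ for degree reasons, so its degree is at most $h$. The only difference is that you write out the generator computation behind $N_{n+1}=A_1N_n$, which the paper leaves implicit.
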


\begin{proof}
Since $A$ is standard graded, $N_{n+1}=A_1N_n$ for $n\ge g$.
A zero component in this range forces every subsequent component
to vanish. As all components are finite-dimensional, this would
make $N$ finite-dimensional. This proves the first assertion.
The second follows because multiplication by any $Y_j$ raises
degree. If $N$ is finite-dimensional and nonzero, its last nonzero
degree is therefore at most $h$. The final statement follows,
including the case $N=0$.
\end{proof}

\subsection{Compact faces and optimal supports}

The Newton polyhedron and analytic spread of $I$ are
\[
 \NP(I)=\conv\{a_1,\ldots,a_s\}+\RR_{\ge0}^r,\qquad
 \ell(I)=\dim\bigl(\Rees(I)/\mathfrak m\Rees(I)\bigr).
\]
The compact-face characterization of analytic spread was proved
by Bivi\`a-Ausina~\cite{Bivia}; for polynomial rings over arbitrary
fields, see Singla~\cite[Corollary~4.10]{Singla}. We give the
inequality and support argument needed here.
\begin{lemma}[Compact faces and analytic spread]\label{lem:compact}
For every nonempty compact face $F$ of $\NP(I)$,
\[
 \dim F+1\le\ell(I).
\]
\end{lemma}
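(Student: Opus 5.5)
The plan is to realize $\ell(I)$ as the Krull dimension of the special fiber ring $\kk[I_F]$ attached to a single face, and to bound this from below using the monomials of a compact face. Let $F$ be a nonempty compact face of $\NP(I)$. Since $F$ is a compact face of $\conv\{a_1,\ldots,a_s\}+\RR_{\ge0}^r$, its vertices are among the $a_j$. Let $J=\{j:a_j\in F\}$. Choose a weight vector $w\in\RR_{>0}^r$ exposing $F$; positivity of all coordinates is available because $F$ is compact, so it meets no unbounded direction. Then $w\cdot a_j=c$ for $j\in J$ and $w\cdot v>c$ for every other point of $\NP(I)$. In particular $w\cdot a>nc$ for every exponent $a$ of a monomial of $I^n$ with $a\notin nF$.

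First, I will identify $\Rees(I)/\mathfrak m\Rees(I)$ with the fiber cone $\bigoplus_n I^n/\mathfrak mI^n$. This ring is spanned by the classes of the monomials $x^{a}\tau^n$ that lie in a minimal generating set of $I^n$. Next, I will consider the subalgebra $S=\kk[x^{a_j}\tau:j\in J]\subseteq\Rees(I)$, which is an affine semigroup ring. Its dimension is the rank of the lattice generated by the vectors $(a_j,1)$ with $j\in J$. That rank equals $\dim\operatorname{aff}\{a_j:j\in J\}+1=\dim F+1$.

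The key step is to show that the composite $S\to\Rees(I)/\mathfrak m\Rees(I)$ is injective. Equivalently, each monomial $x^{a}\tau^n$ with $a=\sum_{j\in J}\beta_ja_j$ and $|\beta|=n$ must not lie in $\mathfrak mI^n\tau^n$. Suppose instead $x^a=x_ix^b$ with $x^b\in I^n$. Then $w\cdot b=w\cdot a-w_i=nc-w_i<nc$. But every exponent of a monomial of $I^n$ lies in $n\NP(I)=\NP(I^n)$, where $w$ is at least $nc$, a contradiction. Distinct monomials of $S$ therefore remain linearly independent, since the fiber cone is multigraded with components of dimension at most one, as with $C$ earlier. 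So $S$ embeds as a graded subring.

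Finally, I would conclude that $\ell(I)=\dim(\Rees(I)/\mathfrak m\Rees(I))\ge\dim S=\dim F+1$. The main subtlety is justifying $\dim$ of a subring $\le\dim$ of the ring. Since the fiber cone is generated over $S$ by finitely many monomials, it is a finitely generated $\kk$-algebra. I would argue via transcendence degree: $S$ is a domain, and the fiber cone has minimal primes. Rather than working with integrality, the plan is to use the $\ZZ^r$-grading. The monomials of $S$ form a multigraded $\kk$-subspace. Taking a multigraded prime $\mathfrak p$ of the fiber cone that meets $S$ in zero gives $\dim\ge\operatorname{trdeg}\kk(S)$; such a prime exists by a standard multiplicative-set argument, since the monomials of $S$ are nonzero and their products stay nonzero. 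Alternatively, I could just compare Hilbert functions. The number of monomials of $S$ in degree $n$ grows like $n^{\dim F}$, and these inject into degree $n$ of the fiber cone, whose Hilbert polynomial has degree $\ell(I)-1$. That gives the inequality directly, and it is the route I would use.
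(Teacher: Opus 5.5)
Your proof is correct, but it reaches the dimension inequality by a different mechanism than the paper. The core computation is the same in both. With a strictly positive exposing functional $w$ taking the value $c$ on $F$, every monomial of $I^n$ has $w$-weight at least $nc$. Hence a product of $n$ generators lying on $F$ cannot lie in $\mm I^n$, which is exactly the paper's remark that such a product is a minimal generator of $I^n$.

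The two arguments then diverge. The paper also shows that the weights $w\cdot b-nc$ are nonnegative and add on nonzero products. So the positive-weight classes span an ideal $B_{>0}$ of the fiber ring $B$. It further shows that the weight-zero basis monomials are precisely the products of generators on $F$. This gives $\kk[x^{a_j}\tau:a_j\in F]\cong B/B_{>0}$, a \emph{quotient} of $B$, and the inequality $\dim F+1\le\ell(I)$ follows immediately.

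You prove only the injectivity half, which realizes the face ring as a graded \emph{subring} of $B$. You then need a separate argument that this subring cannot have larger dimension, and your Hilbert-function comparison supplies it correctly. Both $S$ and $B$ are standard graded with degree-zero part $\kk$, so $\dim_\kk S_n\le\dim_\kk B_n$ forces $\dim F=\dim S-1\le\ell(I)-1$. The growth of $S_n$ can even be made elementary: pick $\dim F+1$ affinely independent vertices of $F$; the $\binom{n+\dim F}{\dim F}$ products of $n$ of them are then pairwise distinct monomials.

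So your route avoids the ideal-and-quotient bookkeeping but relies on the standard fact that the Hilbert polynomial has degree $\dim-1$. The paper's route gives the stronger structural statement that the face ring is a quotient (indeed a retract) of $B$, so no growth comparison is needed.

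If you had instead used your alternative prime-avoidance sketch, you would need to take $\mathfrak p$ maximal among ideals avoiding the monomials of $S$, so that it is multigraded. Mere disjointness from those monomials does not give $\mathfrak p\cap S=0$.
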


\begin{proof}
Choose a supporting linear functional $c$ such that
\[
 F=\{x\in\NP(I):c\cdot x=\nu\},
 \qquad \nu=\min_{x\in\NP(I)}c\cdot x.
\]
All coordinates of $c$ are nonnegative, since every coordinate
vector is a recession direction of $\NP(I)$.  If $c_i=0$, then
$F+\RR_{\ge0}e_i\subseteq F$, contradicting compactness.
Thus $c_i>0$ for every $i$.  In particular,
\[
 \nu=\min_j c\cdot a_j,
 \qquad F=\conv\{a_j:a_j\in F\}.
\]

Write $B=\Rees(I)/\mm\Rees(I)$ for the fiber ring.
Its degree-$n$ component has a monomial basis indexed by
$G(I^n)$.  For the class represented by $x^b\tau^n$, define
its weight to be $c\cdot b-n\nu$.
Every such weight is nonnegative: a monomial in $I^n$ is divisible
by a product of $n$ generators of $I$.
Weights add whenever a product of basis classes is nonzero.
Consequently, the classes of positive weight span an ideal
$B_{>0}$ of $B$.

The quotient $B/B_{>0}$ is the affine semigroup ring
\[
 \kk[x^{a_j}\tau:a_j\in F].
\]
Indeed, the exponent $b$ of a product of $n$ generators on $F$
satisfies $c\cdot b=n\nu$. The product is a minimal generator of
$I^n$, since a proper divisor belonging to $I^n$ would have
$c$-weight strictly less than $n\nu$.
Conversely, any basis monomial of weight zero is a product of
generators on $F$: in a representation by $n$ generators and
nonnegative slack, weight zero excludes both positive slack
and generators outside $F$.
The semigroup ring in the display has dimension
\[
 \operatorname{rank}_{\ZZ}\langle(a_j,1):a_j\in F\rangle
 =\dim F+1.
\]
Since it is a quotient of $B$, this proves the assertion.
\end{proof}

\begin{lemma}[Support of an optimal solution]\label{lem:support}
For every nonempty real fiber $P_q$ of positive degree, a
cost-minimizing vertex can be chosen with at most $\ell(I)$
positive $Y$-coordinates.
\end{lemma}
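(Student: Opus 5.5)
We argue on the face of the real fiber $P_q$ consisting of cost minima and connect it to a compact face of $\NP(I)$. Write $q=(b,n)$ with $n\ge1$. The set $P_q^{\min}$ of cost-minimizing points is a nonempty face of the polytope $P_q$. Our aim is a minimizer $z$ whose $Y$-part is supported on generators $a_j$ lying in a single compact face $F$ of $\NP(I)$ with $\dim F\le |\supp z_Y|-1$, arranged so that the points $(a_j,1)$ for $j\in\supp(z_Y)$ are linearly independent. Lemma~\ref{lem:compact} then gives $|\supp(z_Y)|=\dim\conv\{a_j:j\in\supp z_Y\}+1\le\dim F+1\le\ell(I)$.

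Step 1: reduce to slack-free optimizers. For any minimizer $z=(\kappa,\beta)$, the point $x=\sum_j\beta_ja_j/n$ lies in $\conv\{a_j\}$. We claim $x$ minimizes the first coordinate over the fiber $(\NP(I)\cap\{x'=y'\})$ with $x'$ fixed at $(b-\kappa)/n$. We will instead use a cleaner formulation: $z$ is optimal iff the point $y=(\cost(z),b)/n\in\RR^r$ lies on the lower boundary of $\NP(I)$ in the $x_1$-direction. Concretely, one has $y\in\NP(I)$ via $y=\sum_j(\beta_j/n)a_j+(0,\kappa/n)$, and no point $y-\epsilon e_1$ lies in $\NP(I)$. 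Indeed, such a point would have the form $\sum\lambda_ja_j+w$ with $w\ge0$, and scaling by $n$ would produce a real feasible point of smaller cost; the $w_1$ part only increases cost. Hence $y$ lies on a face $G$ of $\NP(I)$ whose supporting functional $c$ has $c_1>0$.

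Step 2: pass to a compact face. This is the main obstacle, because $c$ may have zero coordinates, which makes $G$ unbounded. Let $E=\{i:c_i>0\}\ni1$. The representation $y=\sum\lambda_ja_j+w$ forces every $a_j$ with $\lambda_j>0$ into $G$, and forces $w_i=0$ for $i\in E$. We will handle the zero coordinates by projecting: slack in coordinates $i\notin E$ is free. The generators in $G$ project, in the coordinates $E$, onto a face of $\NP$ of the projected configuration. That face is compact there, but we need compactness in $\RR^r$.

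The alternative we would try first is to perturb the cost. Replace $c$ by $c+\epsilon\mathbf 1$ restricted to $\conv\{a_j\}$. Among the generators in $G$, minimize the linear functional $x\mapsto\sum_{i\notin E}x_i$. This picks out a face $F\subseteq G$, and $c'=c+\epsilon\sum_{i\notin E}e_i$ (for small $\epsilon>0$) supports $F$ as a face of $\NP(I)$ with all coordinates positive. Hence $F$ is compact. We then need a minimizer using only generators from $F$. To get one, we modify the original LP by adding a secondary lexicographic objective $\sum_j\beta_j\sum_{i\notin E}a_{ij}$, where the compensating slack $\kappa$ absorbs the changes in the coordinates $i\notin E$. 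This is possible whenever those coordinates decrease, and moving toward $F$ does decrease them. Checking this feasibility is the delicate point: decreasing $x_i$ for $i\notin E$ must keep the slack nonnegative.

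Step 3: choose a vertex. Consider the face of $P_q^{\min}$ on which the $Y$-support lies in $F$. Take a vertex $z$ of this face; it is also a vertex of $P_q$, being a vertex of a face. Its positive columns are linearly independent. In particular the vectors $(a'_j,1)$, and hence $(a_j,1)$, for $j\in\supp z_Y$ are independent. These $a_j$ lie in $F$, so their number is at most $\dim F+1\le\ell(I)$ by Lemma~\ref{lem:compact}.
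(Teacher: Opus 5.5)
Steps 1 and 3 are sound. Step 2, however, has a genuine gap, and the claim it needs is false. You choose $F$ from the face $G$ alone, as the generators of $G$ minimizing $\sum_{i\notin E}x_i$, and then assume some cost minimizer is supported on $F$. But in the fiber the coordinates indexed by $E$ are rigid: $x_1$ is pinned at the optimal cost, and, as you note, the slack vanishes in the other coordinates of $E$. Shifting generator mass toward $F$ lowers the coordinates outside $E$, but in general it also changes the $E$-coordinates, so it leaves the optimal set. Your remark that ``moving toward $F$ does decrease them'' only addresses the harmless coordinates.

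Here is a concrete counterexample. Take $I=(x_1,x_2x_3^5)\subset\kk[x_1,x_2,x_3]$, so $a_1=(1,0,0)$ and $a_2=(0,1,5)$, with $b=(1,7)$ and $n=2$. The fiber $P_q$ is parametrized by $\beta_2\in[0,1]$, with $\beta_1=2-\beta_2$, $\kappa=(1-\beta_2,7-5\beta_2)$ and cost $2-\beta_2$. Hence the unique minimizer has $\beta_1=\beta_2=1$. Here $y=(1/2,1/2,7/2)$. A direct check ($c\ge0$ together with $c\cdot y\le\min\{c_1,c_2+5c_3\}$ forces $c_3=0$ and $c_1=c_2$) shows that every nonzero supporting functional at $y$ is a positive multiple of $(1,1,0)$. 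So $G=\conv\{a_1,a_2\}+\RR_{\ge0}e_3$, $E=\{1,2\}$, and your construction gives $F=\{a_1\}$. Every point of $P_q$ supported on $F$ has cost $2>1$, so the face whose vertex you take in Step 3 is empty. The lexicographic secondary objective cannot help, since the minimizer is unique. The lemma itself still holds here: the support lies on the compact edge $\conv\{a_1,a_2\}$, exposed by $(6,1,1)$. Your procedure just does not find that face.

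The missing idea is that the compact face must be selected using the optimal point, not the face $G$. The paper takes $w\in n\NP(I)$ with the prescribed projected coordinates and least first coordinate. It then picks $v$ minimizing $\sum_iv_i$ over $n\NP(I)\cap\prod_i[0,w_i]$. Minimality of the coordinate sum forces, for every $i$, an inequality active at $v$ whose normal has positive $i$th coordinate. The sum of the active normals is therefore strictly positive and exposes a compact face containing $v$, and $v_1=w_1$ by minimality of $w_1$. Carath\'eodory in that face, together with the slack $w-v$, gives an optimum supported on at most $\ell(I)$ generators by Lemma~\ref{lem:compact}. A vertex is then extracted exactly as in your Step 3.

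Your lexicographic idea can be repaired along these lines. Take the secondary objective to be $\sum_{i\ge2}\sum_j\beta_ja_{ij}$ over all coordinates, i.e.\ maximize the total slack. The resulting unslacked point $\sum_j\beta_ja_j$ is such a $v$. You would still need the active-constraint argument to see that it lies on a compact face.
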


\begin{proof}
Let $n\ge1$ be the prescribed degree.
The fiber $P_q$ is compact: its $Y$-coordinates are nonnegative
and sum to $n$, and its slack coordinates are determined by
these coordinates and the prescribed projected exponents.
It therefore has a cost-minimizing point.
Equivalently, let $w\in n\NP(I)$ have the prescribed
projected coordinates and the least possible first coordinate
$w_1$.  These minima agree because a point of $n\NP(I)$ is a
sum of a nonnegative combination of the $a_j$, with coefficients
summing to $n$, and nonnegative slack; positive slack in the
first coordinate can always be removed when minimizing it.

Put $P=n\NP(I)$ and
\[
 K=P\cap\prod_{i=1}^r[0,w_i].
\]
This is a nonempty compact polytope, since $w\in K$.
Choose $v\in K$ minimizing $\sum_i v_i$.
We claim that $v$ belongs to a compact face of $P$.
Write a finite halfspace representation
\[
 P=\{x\in\RR^r:c_h\cdot x\ge b_h\ (1\le h\le N)\}.
\]
Every $c_h$ is coordinatewise nonnegative, because $P$ is
closed under addition of nonnegative vectors.
For each coordinate $i$, at least one inequality active at $v$
has $(c_h)_i>0$.  Otherwise, the finitely many inactive
inequalities remain satisfied at $v-\varepsilon e_i$ for all
sufficiently small $\varepsilon>0$, and the active inequalities
remain equalities.  Thus $v-\varepsilon e_i\in P$.
This point still lies below $w$ and, being in $P$, is
nonnegative; hence it belongs to $K$, contradicting the choice
of $v$.

Let $c$ be the sum of the normals $c_h$ of all inequalities
active at $v$.  Then every coordinate of $c$ is strictly
positive, and $c\cdot x\ge c\cdot v$ for all $x\in P$.
Therefore
\[
 E=\{x\in P:c\cdot x=c\cdot v\}
\]
is a compact face containing $v$; compactness follows from
$c>0$ and $P\subseteq\RR_{\ge0}^r$.

We also have $v_1=w_1$.
Indeed, if $v_1<w_1$, upward closure and $v_i\le w_i$ for
$i\ge2$ imply
\[
 (v_1,w_2,\ldots,w_r)\in n\NP(I),
\]
contradicting the minimality of $w_1$.

The face $E/n$ of $\NP(I)$ is the convex hull of the generator
exponents that it contains; its strictly positive supporting
normal excludes every nonzero slack direction.
By Lemma~\ref{lem:compact}, $\dim E+1\le\ell(I)$.
Carath\'eodory's theorem in the affine hull of $E/n$ expresses
$v/n$ as a convex combination of at most $\ell(I)$ of those
generator exponents.
Multiplying the coefficients by $n$ and adding the projected
slack $w-v$ gives an optimal point of $P_q$ with at most
$\ell(I)$ positive $Y$-coordinates.
Set all other $Y$-coordinates equal to zero.  This defines a
nonempty face of $P_q$ containing an optimum.
An optimal vertex of that face is a vertex of $P_q$ and has
the required support bound.
\end{proof}

\begin{remark}\label{rem:finite-spread}
If $M$ is finite-dimensional, Lemma~\ref{lem:M} gives
$\depth R/I^n\ge1$ for all sufficiently large $n$. The asymptotic
Burch--Brodmann inequality~\cite{BrodmannSpread}
\[
 \ell(I)+\lim_{n\to\infty}\depth R/I^n\le r
\]
then implies $\ell(I)\le r-1$. This is the only place where
the depth of ordinary powers enters our estimates.
\end{remark}

\subsection{From Graver degrees to stabilization}

\begin{lemma}\label{prop:L}
If $\yd(h^+)\le b$ for every $h\in\mathcal G(D)$, where
$b\ge1$ is an integer, then $L$ is generated in $Y$-degrees at
most $b$.
\end{lemma}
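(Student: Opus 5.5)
The plan is to identify the monomials in $L$ combinatorially and then use the Graver basis to pass from an arbitrary element of $L$ to one of controlled $Y$-degree. By construction $L$ contains every monomial divisible by $X_1$, since $X_1$ maps to $x_1$ and therefore kills $C$. Such monomials are divisible by the generator $X_1$, of $Y$-degree zero. So it suffices to treat monomials $X'^\kappa Y^\beta$. By Lemma~\ref{lem:survive}, such a monomial with $u=(\kappa,\beta)$ lies in $L$ exactly when $u$ is \emph{not} a cost minimum in its integral fiber $F_{Du}$. In that case there is $u'\in F_{Du}$ with $\cost(u')<\cost(u)$. I would show that every such $u$ is divisible by a monomial $X'^{h^+_X}Y^{h^+_Y}$ that itself lies in $L$ and has $Y$-degree at most $b$. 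This proves that $L$ is generated in $Y$-degrees at most $b$.

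The key step is a conformal decomposition. The vector $u-u'$ is a nonzero integer kernel vector of $D$, so it is a conformal sum of Graver basis elements $h^{(1)},\dots,h^{(k)}$. Since $\cost(u-u')>0$ and cost is linear, some $h=h^{(i)}$ satisfies $\cost(h)>0$. Conformality gives
\[
 h^+\le (u-u')^+\le u,\qquad h^-\le (u-u')^-\le u'.
\]
Consider the monomial $w=X'^{h^+_X}Y^{h^+_Y}$, which divides $X'^\kappa Y^\beta$. The point $h^-$ is a nonnegative integer vector in the same fiber as $h^+$, because $Dh=0$. Its cost is strictly smaller, since $\cost(h^+)-\cost(h^-)=\cost(h)>0$. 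Hence $h^+$ is not a cost minimum of $F_{Dh^+}$. Lemma~\ref{lem:survive} then gives $w\in L$. The $Y$-degree of $w$ is $\yd(h^+)\le b$ by hypothesis.

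Some care is needed with fibers of Rees degree zero. Since the last row of $D$ forces $\yd h^+=\yd h^->0$, the fiber has positive degree. Lemma~\ref{lem:survive} was stated for arbitrary fibers, so this should cause no trouble. One must also check that the monomial ideal $L$, defined through vanishing in $C$, is generated by its monomials, which holds by definition. Any monomial containing $X_1$ is handled by the generator $X_1$ of $Y$-degree $0\le b$.

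The main obstacle is making sure the \emph{strict} cost decrease transfers to a single Graver summand. This relies on the conformal decomposition having no cancellation. The costs of the summands add exactly to $\cost(u-u')$, so at least one of them is positive. Everything else is a direct application of Lemma~\ref{lem:survive}.
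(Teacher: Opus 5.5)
Your proposal is correct and follows the paper's proof: you treat monomials containing $X_1$ via the generator $X_1$, and for the others you use Lemma~\ref{lem:survive} to obtain a cheaper point $u'$ in the fiber. You then decompose $u-u'$ conformally into Graver elements, take a summand $h$ with $\cost(h)>0$, and conclude that the monomial of $h^+$ divides $v$, lies in $L$, and has $Y$-degree at most $b$. You state explicitly why that monomial lies in $L$ (namely $h^-\in F_{Dh^+}$ with strictly smaller cost), which the paper leaves implicit.
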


\begin{proof}
The monomial $X_1$ belongs to $L$ and has $Y$-degree zero.
Let $v=X'^\kappa Y^\beta\in L$ have no $X_1$ factor, and put
$u=(\kappa,\beta)$. By Lemma~\ref{lem:survive}, some integral
point $u'$ satisfies $Du'=Du$ and $\cost(u')<\cost(u)$.
Decompose $u-u'$ conformally into Graver basis elements. At least
one summand $h$ has positive cost. The monomial corresponding to
$h^+$ divides $v$, belongs to $L$ by Lemma~\ref{lem:survive},
and has $Y$-degree at most $b$. Thus every monomial of $L$ has
a divisor in $L$ of $Y$-degree at most $b$.
\end{proof}

\begin{proof}[Proof of Theorem~\ref{thm:generators}]
Principal ideals and ideals in at most two variables were treated above. Assume $r\ge3$ and $s\ge2$.
Lemma~\ref{lem:rs-graver} and Lemma~\ref{prop:L} show that $L$
is generated in $Y$-degrees at most $\Gamma$. By
Proposition~\ref{prop:common}, $M$ is generated in degrees at most
\[
 g=\min\{(r-1)\Gamma,s\Gamma-1\}.
\]
If $M$ is infinite-dimensional, Lemma~\ref{lem:graded-stability}
gives $M_n\ne0$ for every $n\ge g$.

Suppose $M$ is finite-dimensional and nonzero, and choose a nonzero
multihomogeneous class $\eta$ in its last nonzero degree $n$.
It is annihilated by every $Y_j$. Proposition~\ref{prop:common}
gives $\beta_j\le\Gamma-1$ in every monomial representation of
$\eta$, and hence $n\le s(\Gamma-1)$.
If $n\ge1$, choose an optimal vertex $z\in P_q$ in its projected
multidegree with $t=|\supp(z_Y)|\le\ell(I)$ by
Lemma~\ref{lem:support}. An integral optimum as in
Lemma~\ref{lem:rs-proximity} represents the same class $\eta$.
There are at most $t(\Gamma-1)$ generator occurrences on the
support of $z_Y$ and at most $\Gamma-1$ outside it. Therefore
\[
 n\le(\ell(I)+1)(\Gamma-1).
\]
Remark~\ref{rem:finite-spread} gives $\ell(I)\le r-1$, so
\[
 \operatorname{end}M\le\min\{r,s\}(\Gamma-1).
\]
The same bound holds when $n=0$; the case $M=0$ needs no bound.
By Lemmas~\ref{lem:graded-stability} and~\ref{lem:M}, membership
of $\mathfrak m$ in $\Ass_R(R/I^n)$ is constant for
\begin{equation}\label{eq:rs-threshold}
 n\ge T(r,s,\Gamma):=
 \max\bigl\{\min\{(r-1)\Gamma,s\Gamma-1\},
                  \min\{r,s\}(\Gamma-1)+1\bigr\}.
\end{equation}

Apply the same argument to every monomial localization $I_E$.
If it has $q\ge3$ variables and $s_E\ge2$ minimal generators,
its parameters satisfy
\[
 q\le r,\quad s_E\le s,\quad
 (s_E-1)d(I_E)^{\min\{q-1,s_E-2\}}\le\Gamma.
\]
The function $T$ is nondecreasing in these parameters.
Localizations in at most two variables, or with one generator,
are stable from the first power; a localization equal to the
whole ring contributes no associated prime. Lemma~\ref{lem:local}
therefore gives a common threshold $T(r,s,\Gamma)$ for all primes.
Finally,
\[
 T(r,s,\Gamma)\le\min\{r,s\}\Gamma.
\]
This proves~\eqref{eq:generator-main}. If $s=2$, then $\Gamma=1$
and~\eqref{eq:rs-threshold} gives $T(r,2,1)=1$.
\end{proof}

\section{Lattice points and generator-independent stabilization}
\label{sec:lattice}

Keep the notation of Section~\ref{sec:algebra}, and assume
$r\ge3$ and $s\ge2$. In the matrix notation of
Section~\ref{sec:matrix}, we have $m=r-1$, $p_j=a'_j$, and
\[
 \Lambda=\lfloor\sigma_md\rfloor,\qquad Q=\Phi_m(\Lambda).
\]
Lemma~\ref{lem:primitive} bounds the Graver degrees by $Q$.
To apply this bound to stabilization, we control the last
nonzero degree of the socle module when it is finite-dimensional.
\subsection{Proximity and the last nonzero socle degree}

We apply Proposition~\ref{prop:matrix-proximity}, which includes
the contribution of fractional slack coordinates. The support
estimate from the analytic spread then improves the degree bound
in the finite-dimensional case.

\begin{proposition}\label{prop:triangle-top}
If $M$ is finite-dimensional and nonzero, then
\begin{equation}\label{eq:triangle-end}
 \operatorname{end}M\le(2r-3)Q-r+1.
\end{equation}
\end{proposition}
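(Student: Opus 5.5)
The plan is to repeat the end-of-socle argument in the proof of Theorem~\ref{thm:generators}, with $\Gamma$ replaced by the column-independent quantity $Q$. In that argument, Lemma~\ref{lem:rs-proximity} will be replaced by Proposition~\ref{prop:matrix-proximity}.

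\emph{Step 1: a per-coordinate bound.} Lemma~\ref{lem:primitive} and Lemma~\ref{prop:L} show that $L$ is generated in $Y$-degrees at most $Q$. Let $n$ be the last nonzero degree of $M$, and choose a nonzero multihomogeneous class $\eta\in M_n$. By Lemma~\ref{lem:graded-stability}, $\eta$ is annihilated by every $Y_j$. Proposition~\ref{prop:common}(2) with $b=Q$ then gives $\beta_j\le Q-1$ in every monomial representation $X'^\kappa Y^\beta$ of $\eta$. If $n=0$ there is nothing to prove, since $Q\ge1$ makes the right side of \eqref{eq:triangle-end} nonnegative. So assume $n\ge1$.

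\emph{Step 2: choosing the vertex and the integral optimum.} Let $q$ be the projected multidegree of $\eta$. By Lemma~\ref{lem:support}, there is a cost-minimizing vertex $z\in P_q$ with $t=|\supp(z_Y)|\le\ell(I)$. By Remark~\ref{rem:finite-spread}, $\ell(I)\le r-1=m$, so $t\le m$. Proposition~\ref{prop:matrix-proximity} supplies an integral optimum $u\in F_q$. By Lemma~\ref{lem:survive}, $u$ represents $\eta$, so Step~1 applies to $u_Y$. We split the sum
\[
n=\sum_j u_{Y_j}
\]
into the part on $\supp(z_Y)$ and the part off it.
\begin{itemize}
\item On the support, the sum is at most $t(Q-1)$ by Step~1.
\item Off the support, $\lfloor z_{Y_j}\rfloor=0$, so $u_{Y_j}=(l_{Y_j})^+$. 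This part is therefore at most $\sum_j(l_{Y_j})^+$.
\end{itemize}
If $d_0=0$, then $u=z$ and nothing lies off the support, so $n\le t(Q-1)\le m(Q-1)$, which is within the target. If $d_0>0$, \eqref{eq:matrix-proximity} bounds the off-support part by $k(Q-1)+d_0$. Hence
\[
n\le (t+k)(Q-1)+d_0 .
\]

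\emph{Step 3: the combinatorial bookkeeping.} This is the only delicate point. We need
\[
t+k\le 2m-1,\qquad d_0\le m-1,
\]
which give
\[
n\le(2m-1)(Q-1)+m-1=(2m-1)Q-m=(2r-3)Q-r+1 .
\]
We always have $d_0\le t-1\le m-1$. For $t+k$ there are two cases.
\begin{itemize}
\item If $c_0=0$, then $k=d_0\le t-1$, so $t+k\le 2t-1\le 2m-1$.
\item If $c_0\ge1$, then $k=d_0+c_0-1$. The vertex condition $t+c_0\le m+1$ gives $t+k\le d_0+m\le t-1+m\le 2m-1$.
\end{itemize}
The main obstacle is exactly this accounting. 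One must not use the crude bound $k\le m$ from Proposition~\ref{prop:matrix-proximity} together with $t\le m$, which would give about $2m(Q-1)$. Instead, $t$, $d_0$ and $c_0$ have to be traded against one another through $t+c_0\le m+1$. The key observation is that only the positive part of $l_Y$ off the support is needed, not the full norm estimate \eqref{eq:matrix-proximity-norm}.
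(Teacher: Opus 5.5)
Your proposal is correct and follows the paper's proof step for step. It uses the cap $\beta_j\le Q-1$ from Proposition~\ref{prop:common}, the support bound $t\le\ell(I)\le r-1$ from Lemma~\ref{lem:support} and Remark~\ref{rem:finite-spread}, and the integral optimum of Proposition~\ref{prop:matrix-proximity} with off-support occurrences counted by $\sum_j(l_{Y_j})^+$; it also has the same case split on $c_0$, using $t+c_0\le r$ to get $t+k\le 2r-3$ and $d_0\le r-2$, written in terms of $m=r-1$.
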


\begin{proof}
By Lemmas~\ref{lem:primitive} and~\ref{prop:L}, the ideal $L$
is generated in $Y$-degrees at most $Q$. Choose a nonzero
multihomogeneous monomial class $\eta$ in the last nonzero degree
$n$ of $M$. It is annihilated by every $Y_j$. Thus, in every
monomial representation $X'^\kappa Y^\beta$ of $\eta$,
\begin{equation}\label{eq:lattice-caps}
 \beta_j\le Q-1\qquad(1\le j\le s)
\end{equation}
by Proposition~\ref{prop:common}. The case $n=0$ is immediate,
so assume $n\ge1$, and let $q$ be its projected multidegree.
Choose an optimal vertex $z\in P_q$ with
$t=|\supp(z_Y)|\le\ell(I)$ by Lemma~\ref{lem:support}.
Since $M$ is finite-dimensional,
$t\le\ell(I)\le r-1$ by Remark~\ref{rem:finite-spread}.

Use the integral optimum $u$ of
Proposition~\ref{prop:matrix-proximity}, which represents $\eta$
by Lemma~\ref{lem:survive}. Retain $d_0,c_0,k$ from that
proposition. If $d_0=0$, then $u=z$ and
$n\le t(Q-1)$, giving~\eqref{eq:triangle-end}.
Suppose $d_0>0$. Its generator occurrences on $\supp(z_Y)$
number at most $t(Q-1)$ by~\eqref{eq:lattice-caps}.
Every occurrence outside this support contributes to
$(u_Y-\lfloor z_Y\rfloor)^+$. Therefore
\begin{equation}\label{eq:triangle-top}
 n\le t(Q-1)+k(Q-1)+d_0.
\end{equation}

We have $t+c_0\le r$ and $d_0\le t-1\le r-2$.
If $c_0=0$, then $t+k=t+d_0\le2t-1\le2r-3$.
If $c_0\ge1$, then
\[
 t+k=t+d_0+c_0-1\le2t+c_0-2\le r+t-2\le2r-3.
\]
Substitution in~\eqref{eq:triangle-top} gives
\[
 n\le(2r-3)(Q-1)+(r-2)=(2r-3)Q-r+1.
\]
\end{proof}

\begin{proposition}\label{prop:refined}
Let $r\ge3$, $m=r-1$, and $\Lambda=\lfloor\sigma_md\rfloor$.
Then
\begin{equation}\label{eq:refined}
 \astab(I)\le(2r-3)\Phi_m(\Lambda)-r+2.
\end{equation}
\end{proposition}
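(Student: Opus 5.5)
The plan mirrors the proof of Theorem~\ref{thm:generators}, with the Graver bound $\Gamma$ replaced by $Q=\Phi_m(\Lambda)$ and the proximity estimate of Lemma~\ref{lem:rs-proximity} replaced by Proposition~\ref{prop:triangle-top}. First dispose of the trivial cases. If $r\le2$ or $s=1$, then $\astab(I)=1$ by Lemma~\ref{lem:two} and the remark on principal ideals, and the right side of~\eqref{eq:refined} is at least one because $Q\ge1$. So assume $r\ge3$ and $s\ge2$.

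Lemmas~\ref{lem:primitive} and~\ref{prop:L} show that $L$ is generated in $Y$-degrees at most $Q$. By Proposition~\ref{prop:common}, $M$ is then generated in degrees at most $g=\min\{(r-1)Q,sQ-1\}\le(r-1)Q$. If $M$ is infinite-dimensional, Lemma~\ref{lem:graded-stability} gives $M_n\ne0$ for all $n\ge g$. If $M$ is finite-dimensional and nonzero, Proposition~\ref{prop:triangle-top} bounds its last nonzero degree by $(2r-3)Q-r+1$. Lemmas~\ref{lem:graded-stability} and~\ref{lem:M} then show that membership of $\mathfrak m$ in $\Ass_R(R/I^n)$ is constant for
\[
 n\ge T_r(Q):=\max\{(r-1)Q,\,(2r-3)Q-r+2\}.
\]
Since $r\ge3$ and $Q\ge1$, we have $(2r-3)Q-r+2-(r-1)Q=(r-2)(Q-1)\ge0$, so $T_r(Q)=(2r-3)Q-r+2$.

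Next pass to every prime $P_E$ through Lemma~\ref{lem:local}. Take a localization $I_E\ne R_E$ with $q=|E|\ge3$ variables and at least two generators. It has $d(I_E)\le d$, hence $\Lambda_E=\lfloor\sigma_{q-1}d(I_E)\rfloor\le\lfloor\sigma_{q-1}d\rfloor$, and the same argument gives the threshold $T_q(\Phi_{q-1}(\Lambda_E))$. Localizations in at most two variables, or with one generator, stabilize from the first power, and the whole ring contributes nothing. It remains to show
\[
 (2q-3)\Phi_{q-1}(\Lambda_E)-q+2\le(2r-3)\Phi_m(\Lambda)-r+2 \qquad (q\le r).
\]
Monotonicity of $\Phi_{q-1}$ in $\Lambda$ is clear from~\eqref{eq:Phi}. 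Monotonicity in the dimension follows because $\sigma_{q-1}\le\sigma_m$ and the lattice points of $\Lambda B_\triangle$ in $\RR^{q-1}$ embed into those in $\RR^{m}$ by adding a zero coordinate.

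The main obstacle is monotonicity of the whole expression in $q$, because of the $-q+2$ term. Raising $q$ by one changes the threshold by at least $2\Phi-1\ge1$, since $\Phi\ge1$ and the lattice-point count does not decrease. So the expression is nondecreasing in $q$, and the common threshold for all associated primes is $(2r-3)\Phi_m(\Lambda)-r+2$, which proves~\eqref{eq:refined}.
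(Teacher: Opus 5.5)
Your proof is correct and follows the paper's argument: $L$ is generated in $Y$-degrees at most $Q$ by Lemmas~\ref{lem:primitive} and~\ref{prop:L}, so $M$ is generated in degrees at most $(r-1)Q$. Proposition~\ref{prop:triangle-top} handles the finite-dimensional case, the difference $(r-2)(Q-1)\ge0$ identifies the threshold, and Lemma~\ref{lem:local} transfers it to every $P_E$. The only cosmetic difference is in the last step: the paper bounds $\Phi_{q-1}(\Lambda_E)$ by the ambient $Q$ and uses $(r-q)(2Q-1)\ge0$ directly, whereas you compare consecutive dimensions; both are valid.
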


\begin{proof}
If $s=1$, the assertion follows from $\astab(I)=1$, since the
right-hand side of~\eqref{eq:refined} is at least one. Assume
$s\ge2$ and put $Q=\Phi_m(\Lambda)$. Lemmas~\ref{lem:primitive} and
\ref{prop:L}, followed by Proposition~\ref{prop:common}, show
that $M$ is generated in degrees at most $g=(r-1)Q$.
Proposition~\ref{prop:triangle-top} bounds its last nonzero degree
in the finite-dimensional case. Since
\[
 [(2r-3)Q-r+2]-g=(r-2)(Q-1)\ge0,
\]
Lemmas~\ref{lem:graded-stability} and~\ref{lem:M} imply that
membership of $\mathfrak m$ is constant for
$n\ge(2r-3)Q-r+2$.

For any monomial prime $P_E$, put $q=|E|$. If $I_E=R_E$, it
never occurs. If $q\le2$ or $I_E$ is principal, it is stable
from the first power. Otherwise, apply the preceding argument to
$I_E$. Since $d(I_E)\le d$ and
$\sigma_{q-1}\le\sigma_m$, extension by zero coordinates embeds
\[
 \lfloor\sigma_{q-1}d(I_E)\rfloor
 (\Delta_{q-1}-\Delta_{q-1})\cap\ZZ^{q-1}
\]
in $\Lambda B_\triangle\cap\ZZ^m$.
Thus membership of $P_E$ is constant from a threshold no larger
than $(2q-3)Q-q+2$. For $q\le r$ this is at most
$(2r-3)Q-r+2$, since the difference equals
$(r-q)(2Q-1)\ge0$. Lemma~\ref{lem:local} completes the proof.
\end{proof}

\section{The uniform coefficient}\label{sec:uniform}

We estimate the lattice-point bound and then combine it with
Theorem~\ref{thm:generators}. The map
\[
 (v_1,\ldots,v_m)\longmapsto
 (v_1,\ldots,v_m,-v_1-\cdots-v_m)
\]
identifies the lattice of $B_\triangle$ with the root lattice of
type $A_m$ and sends its nonzero vertices to the roots $e_i-e_j$.
Its Ehrhart series is therefore
\begin{equation}\label{eq:ehrhart-series}
 \sum_{L\ge0}\Phi_m(L)z^L
 =\frac{\sum_{j=0}^m\binom mj^2z^j}{(1-z)^{m+1}};
\end{equation}
see~\cite[Example~3.5]{HKM}. We also write $\Phi_m(x)$ for its
Ehrhart polynomial evaluated at a real variable.

\begin{lemma}\label{lem:ehrhart-positive}
For $m\ge2$,
\begin{equation}\label{eq:ehrhart-recurrence}
 m^2\Phi_m(x)=(2m-1)(2x+1)\Phi_{m-1}(x)
                  +(m-1)^2\Phi_{m-2}(x),
\end{equation}
where $\Phi_0(x)=1$ and $\Phi_1(x)=2x+1$.
Every coefficient of $\Phi_m(x)$ and of
\[
 P_m(x)=(2m-1)\Phi_m(x)-m+1
\]
in the power basis is positive. In particular, for every $a>0$,
the function $P_m(ax)/x^m$ is nonincreasing on $x>0$.
\end{lemma}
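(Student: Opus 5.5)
The plan is to derive the recurrence~\eqref{eq:ehrhart-recurrence} from the Ehrhart series~\eqref{eq:ehrhart-series}, and then obtain positivity of coefficients by induction on $m$ directly from the recurrence.

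\textbf{The recurrence.} Write $N_m(z)=\sum_{j}\binom mj^2z^j$, the Narayana-type numerator. The sequence $N_m$ is a Legendre-type family: $N_m(z)=(1-z)^mL_m\bigl(\frac{1+z}{1-z}\bigr)$, where $L_m$ is the Legendre polynomial. The Legendre recurrence
\[
 mL_m(y)=(2m-1)yL_{m-1}(y)-(m-1)L_{m-2}(y)
\]
translates into a three-term recurrence for $N_m$, hence for the generating functions $N_m(z)/(1-z)^{m+1}$. Alternatively, I would avoid the Legendre detour and use the explicit polynomial
\[
 \Phi_m(x)=\sum_{p+q\le m}\frac{m!}{p!q!(m-p-q)!}\binom xp\binom xq
\]
from~\eqref{eq:Phi}, which agrees with the Ehrhart polynomial at integers and hence identically. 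Both sides of~\eqref{eq:ehrhart-recurrence} are polynomials of degree at most $m$. I would verify the identity either via the known closed form $\Phi_m(x)=\sum_k\binom mk\binom{m+x-k}{m}\binom{x}{k}$-style expressions, or, more robustly, through the generating function in $m$. Indeed, $\sum_m \Phi_m(x)t^m$ should have the Legendre-type form $\bigl(1-2(2x+1)t+t^2\bigr)^{-1/2}$ after suitable rescaling. Differentiating the identity $F^{2}\cdot(\ldots)=1$ yields the recurrence with coefficients $m^2$, $(2m-1)(2x+1)$ and $(m-1)^2$. This matches the shape $m^2,\ (2m-1),\ (m-1)^2$, which is that of the Legendre recurrence for the variable $2x+1$ with a squared normalization, i.e.\ of the Apéry/Delannoy central numbers. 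The main obstacle is pinning down the correct generating function. I would test it against the initial values $\Phi_0=1$, $\Phi_1=2x+1$, and $\Phi_2=3x^2+3x+1$ (the $A_2$ count, which equals $1+6+\dots$ at $x=1$, namely $7$). Once a candidate satisfies these initial values, the recurrence follows.

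\textbf{Positivity of $\Phi_m$.} Induct on $m$. The term $(2m-1)(2x+1)\Phi_{m-1}$ has positive coefficients in degrees $0$ through $m$, and $(m-1)^2\Phi_{m-2}$ has nonnegative coefficients. Dividing by $m^2$ preserves positivity, so every coefficient of $\Phi_m$ is positive.

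\textbf{Positivity of $P_m$ and monotonicity.} For $P_m$, only the constant term needs attention. Since $\Phi_m(0)=1$, the constant term of $P_m$ is $(2m-1)-m+1=m>0$, and the other coefficients are $(2m-1)$ times those of $\Phi_m$, hence positive. Finally, $P_m(ax)$ has degree $m$ with positive coefficients $c_i$, so
\[
 \frac{P_m(ax)}{x^m}=\sum_i c_ia^ix^{i-m}.
\]
Each term is a nonincreasing function of $x>0$, because $i\le m$. This gives the monotonicity claim.
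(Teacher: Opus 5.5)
\textbf{The gap is in the recurrence.} Your positivity and monotonicity arguments are the paper's own: induction on the recurrence, constant term $m$ for $P_m$, and termwise monotonicity. The gap is in the recurrence \eqref{eq:ehrhart-recurrence} itself. The paper quotes it from \cite[Example~3.5]{HKM}, and your sketch does not establish it.

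\textbf{Your generating-function route fails.} We have $(1-2(2x+1)t+t^2)^{-1/2}=\sum_m L_m(2x+1)t^m$, and differentiating $F^2(1-2yt+t^2)=1$ gives Bonnet's recurrence $mL_m=(2m-1)yL_{m-1}-(m-1)L_{m-2}$. Its coefficients are unsquared and carry a minus sign, so it is not the $m^2$, $(m-1)^2$, plus-sign shape you need. Already $L_2(2x+1)=6x^2+6x+1$ is not proportional to $\Phi_2(x)=3x^2+3x+1$, so no rescaling of $t$ or of the $\Phi_m$ repairs this. In fact \eqref{eq:ehrhart-recurrence} translates into a second-order differential equation for $\sum_m\Phi_m(x)t^m$. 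Matching a candidate against $\Phi_0,\Phi_1,\Phi_2$ would prove nothing in any case. The closed form you quote is also off: at $m=1$ it gives $x^2+x+1$.

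\textbf{Your Legendre route is incomplete.} Multiplying Bonnet's recurrence by $(1-z)^m$ does give a true identity for $H_m(z)=N_m(z)/(1-z)^{m+1}$, namely
$m(1-z)H_m=(2m-1)(1+z)H_{m-1}-(m-1)(1-z)H_{m-2}$.
On coefficients, however, this relates $\Phi_j(x)$ to $\Phi_j(x-1)$; it is not \eqref{eq:ehrhart-recurrence}.

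\textbf{The missing idea.} The factor $2x+1$ in \eqref{eq:ehrhart-recurrence} acts on the \emph{coefficients} of the Ehrhart series. It is therefore the operator $2z\frac{d}{dz}+1$, not multiplication by a function of $z$. Your identity $N_m(z)=(1-z)^mL_m(y)$, with $y=\frac{1+z}{1-z}$, gives $H_m(z)=L_m(y)/(1-z)$. Using $y-1=\frac{2z}{1-z}$ and $y^2-1=\frac{4z}{(1-z)^2}$, the recurrence becomes equivalent to
\[
 m^2L_m=(2m-1)\bigl[(y^2-1)L_{m-1}'+yL_{m-1}\bigr]+(m-1)^2L_{m-2}.
\]
Apply the derivative identity $(y^2-1)L_n'=n(yL_n-L_{n-1})$ with $n=m-1$. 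The bracket then equals $myL_{m-1}-(m-1)L_{m-2}$, and Bonnet's recurrence finishes the identity. Supplying this argument closes the gap; alternatively, cite the recurrence as the paper does. The rest of your proof then goes through unchanged.
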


\begin{proof}
The recurrence is given in~\cite[Example~3.5]{HKM} and also
follows from~\eqref{eq:ehrhart-series}. It implies coefficient
positivity by induction. Since $\Phi_m(0)=1$, the constant
coefficient of $P_m$ is $m$, and all other coefficients are
positive. If $P_m(x)=\sum_{j=0}^m b_jx^j$, then
\[
 \frac{P_m(ax)}{x^m}=\sum_{j=0}^m b_ja^jx^{j-m},
\]
and every summand is nonincreasing.
\end{proof}

\begin{lemma}\label{lem:volume-count}
For every integer $L\ge0$,
\begin{equation}\label{eq:cube-count}
 \Phi_m(L)\le
 \frac1{m!}\binom{2m}{m}(L+m/2)^m.
\end{equation}
Consequently, for $d\ge2$,
\begin{equation}\label{eq:volume-coefficient}
 \frac{\astab(I)}{d^m}
 \le (2m-1)\frac{\binom{2m}{m}}{m!}
            \left(\sigma_m+\frac{m}{2d}\right)^m.
\end{equation}
\end{lemma}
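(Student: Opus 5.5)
The plan is to prove the lattice-point estimate \eqref{eq:cube-count} by a standard volume-packing argument, and then substitute it into Proposition~\ref{prop:refined}. For the first part, recall that $\Phi_m(L)=|LB_\triangle\cap\ZZ^m|$, where $B_\triangle=\Delta_m-\Delta_m$. Around each lattice point $v\in LB_\triangle$ place the half-open cube $v+[-\tfrac12,\tfrac12)^m$. These cubes are pairwise disjoint, and each has volume one.

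The key containment is that every point $w$ of the cube $[-\tfrac12,\tfrac12]^m$ satisfies
\[
\textstyle\sum_i w_i^+\le m/2,\qquad \sum_i w_i^-\le m/2 .
\]
By \eqref{eq:triangle-norm} this means $\|w\|_\triangle\le m/2$, so the cube lies in $\tfrac m2B_\triangle$. Since $B_\triangle$ is convex, $LB_\triangle+\tfrac m2B_\triangle=(L+\tfrac m2)B_\triangle$. Hence the union of the cubes lies in $(L+m/2)B_\triangle$, and
\[
\Phi_m(L)\le (L+m/2)^m\operatorname{vol}_m(B_\triangle).
\]
It remains to compute $\operatorname{vol}_m(B_\triangle)=\binom{2m}{m}/m!$. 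I would read this off the Ehrhart series \eqref{eq:ehrhart-series}: the leading coefficient of $\Phi_m$ is the volume, and it equals $h^*(1)/m!=\sum_j\binom mj^2/m!=\binom{2m}m/m!$ by Vandermonde's identity. This step, identifying the normalized volume of the type-$A_m$ root polytope, is the only nonroutine input, and it is supplied by the cited Ehrhart series. This proves \eqref{eq:cube-count}.

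For \eqref{eq:volume-coefficient}, note that $r=m+1$, so Proposition~\ref{prop:refined} reads
\[
\astab(I)\le(2m-1)\Phi_m(\Lambda)-m+1\le(2m-1)\Phi_m(\Lambda).
\]
Applying \eqref{eq:cube-count} with $L=\Lambda$ and using $\Lambda=\lfloor\sigma_m d\rfloor\le\sigma_m d$, we get
\[
\Phi_m(\Lambda)\le\frac{\binom{2m}m}{m!}\,(\Lambda+m/2)^m\le\frac{\binom{2m}m}{m!}\,(\sigma_m d+m/2)^m,
\]
since $(L+m/2)^m$ is increasing in $L\ge0$. Dividing by $d^m$ gives \eqref{eq:volume-coefficient}. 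The case $s=1$ is already covered inside Proposition~\ref{prop:refined}, and the hypothesis $d\ge2$ is not needed for the inequality itself; presumably it is used afterwards to bound $m/(2d)\le m/4$. I expect no serious obstacle beyond checking that the cube really lies in $\tfrac m2B_\triangle$ and that the Minkowski-sum identity for dilates of a convex body applies here.
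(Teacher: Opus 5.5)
Your proposal is correct and follows the paper's proof: the same packing of half-open unit cubes into $(L+m/2)B_\triangle$, followed by substituting $L=\lfloor\sigma_m d\rfloor$ into Proposition~\ref{prop:refined} after discarding the nonpositive term $-m+1$. The only cosmetic difference is that you read off $\operatorname{vol}(B_\triangle)=\binom{2m}{m}/m!$ from $h^*(1)$ of the Ehrhart series~\eqref{eq:ehrhart-series} via Vandermonde, whereas the paper takes it from the top-degree terms ($p+q=m$) of~\eqref{eq:Phi}. Your remark that $d\ge2$ is not needed for the inequality itself is also accurate.
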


\begin{proof}
The leading terms of~\eqref{eq:Phi} give
\[
 \operatorname{vol}(B_\triangle)
 =m!\sum_{p=0}^m\frac1{p!^2(m-p)!^2}
 =\frac1{m!}\binom{2m}{m}.
\]
Place the half-open unit cube $u+[-1/2,1/2)^m$ at each lattice
point $u\in LB_\triangle$. The cubes have disjoint interiors and
total volume $\Phi_m(L)$. Their union is contained in
$(L+m/2)B_\triangle$, since adding a vector in
$[-1/2,1/2]^m$ increases either sign sum by at most $m/2$.
This proves~\eqref{eq:cube-count}. Apply it with
$L=\lfloor\sigma_md\rfloor$ in Proposition~\ref{prop:refined}
and discard its negative final term to obtain
\eqref{eq:volume-coefficient}.
\end{proof}

\begin{proof}[Proof of Theorem~\ref{thm:main}]
The case $r=2$ is Lemma~\ref{lem:two}. Let $r\ge3$ and $m=r-1$.
Principal ideals have index of stability one, so assume $s\ge2$.
If $d=1$, then $I$ is generated by variables and $\astab(I)=1$.
Assume $d\ge2$.

First suppose $2\le m\le7$. Proposition~\ref{prop:refined} and
Lemma~\ref{lem:ehrhart-positive} give
\[
 \frac{\astab(I)}{d^m}
 \le\frac{P_m(\sigma_md)}{d^m}
 \le\frac{P_m(2\sigma_m)}{2^m}.
\]
For $2\le m\le7$, set
\[
 R_m=\frac{4P_m(2\sigma_m)}{2^m(54/5)^m}.
\]
Exact rational substitution in~\eqref{eq:ehrhart-recurrence} gives
the following componentwise inequalities:
\[
 (R_2,R_3,R_4,R_5,R_6,R_7)
 <\frac1{1000}(944,924,918,920,926,934).
\]
Thus $P_m(2\sigma_m)/2^m<(54/5)^m/4<(4e)^m/4$, where
$e>27/10$ was used in the last inequality.

Now suppose $m\ge8$ and $2\le d<2m$. A minimal monomial
generating set of degree at most $d$ has at most
$\binom{d+m}{m}$ elements. Indeed, multiply each generator by a
power of a fixed variable to make its degree equal to $d$.
Equal resulting monomials would make the original generators
comparable under divisibility, so this map is injective.
Theorem~\ref{thm:generators} gives
\begin{align*}
 \frac{\astab(I)}{d^m}
 &\le(m+1)(s-1)
 \le(m+1)\left[\binom{d+m}{m}-1\right]\\
 &\le(m+1)\binom{3m}{m}
 \le(m+1)(27/4)^m.
\end{align*}
The last inequality follows from the term of index $m$ in
$(1/3+2/3)^{3m}=1$. Dividing by $(4e)^m$ gives a quantity
smaller than $(m+1)(5/8)^m$. This expression decreases for
$m\ge8$, and $9(5/8)^8<1/4$.

It remains to consider $m\ge8$ and $d\ge2m$. The standard
Wallis--Stirling inequalities
\[
 \binom{2m}{m}\le\frac{4^m}{\sqrt{\pi m}},\qquad
 m!\ge\sqrt{2\pi m}\left(\frac me\right)^m
\]
and Lemma~\ref{lem:volume-count} yield
\begin{align*}
 \frac{\astab(I)}{d^m}
 &\le(2m-1)\frac{\binom{2m}{m}}{m!}
                     \left(\sigma_m+\frac14\right)^m\\
 &\le\frac{\sqrt2}{\pi}
       \left(1-\frac{3}{4m}+\frac1{m^2}\right)^m(4e)^m\\
 &\le\frac{\sqrt2}{\pi}e^{-5/8}(4e)^m
 <\frac14(4e)^m.
\end{align*}
Here $m\ge8$ gives $-3/4+1/m\le-5/8$. For the last inequality,
use $\sqrt2/\pi<11/24$ and
\[
 e^{5/8}>1+\frac58+\frac{(5/8)^2}{2}
                         +\frac{(5/8)^3}{6}>\frac{11}{6}.
\]
These cases prove~\eqref{eq:main}.
\end{proof}

\section*{Acknowledgements}
The author is partially supported by the Research Project NAFOSTED 101.04-2024.07.

\end{document}